\definecolor{refkey}{gray}{.75}
\definecolor{labelkey}{gray}{.2}
\newtheorem{theorem}{Theorem}[section]
\newtheorem{proposition}[theorem]{Proposition}
\newtheorem{lemma}[theorem]{Lemma}
\newtheorem{corollary}[theorem]{Corollary}
\newtheorem{conjecture}[theorem]{Conjecture}
\theoremstyle{definition}
\newtheorem{example}[theorem]{Example}
\theoremstyle{remark}
\newcommand{\overbar}[1]{\mkern 1.5mu\overline{\mkern-1.5mu#1\mkern-1.5mu}\mkern 1.5mu}
\newcommand{\x}{\times}
\newcommand{\Hom}{\mathrm{Hom}}
\newcommand{\EE}{\mathbb{E}}
\newcommand{\FF}{\mathcal{F}}
\renewcommand{\S}{\mathcal{S}}
\newcommand{\W}{\mathcal{W}}
\newcommand{\TT}{\mathcal{T}}
\newcommand{\mockalph}[1]{}
\tikzstyle{p}+=[fill=black, circle, minimum width = 1pt, inner sep =
\tikzstyle{w}+=[fill=white, draw, circle, minimum width = 1pt, inner sep =
\title{\vspace{-0.7cm}Domination inequalities and dominating graphs}
\author{David Conlon\thanks{Department of Mathematics, California Institute of Technology, Pasadena, CA 91125, USA. Email: {\tt dconlon@caltech.edu}.
Research supported by NSF Award DMS-2054452.}
\and
Joonkyung Lee\thanks{
		Department of Mathematics, Yonsei University, Seoul and Extremal Combinatorics and Probability Group,
Institute for Basic Science (IBS), Daejeon, South Korea.
		Email: \texttt{joonkyunglee}@\texttt{yonsei.ac.kr}.
		Research supported by the National Research Foundation of Korea (NRF) Grant 2022R1C1C1010300, Samsung STF Grant SSTF-BA2201-02 and IBS-R029-C4.}
}
\date{}
\begin{document}
\maketitle

\begin{abstract}
We say that a graph $H$ dominates another graph $H'$ if the number of homomorphisms from $H'$ to any graph $G$ is dominated, in an appropriate sense, by the number of homomorphisms from $H$ to $G$. We study the family of dominating graphs, those graphs with the property that they dominate all of their subgraphs. It has long been known that even-length paths are dominating in this sense and a result of Hatami implies that all weakly norming graphs are dominating. In a previous paper, we showed that every finite reflection group gives rise to a family of weakly norming, and hence dominating, graphs. Here we revisit this connection to show that there is a much broader class of dominating graphs.
\end{abstract}

\section{Introduction}

A \emph{graphon} is a symmetric measurable function $W$ from $[0,1]^2$ to $[0,1]$, where symmetric here means that $W(x,y) = W(y,x)$ for all $(x,y) \in [0,1]^2$. Very roughly, this may be seen as a continuous analogue of 
a graph. The \emph{homomorphism density} $t_H(W)$ of a graph $H$ in a graphon $W$ is then given by 
\[t_H(W) = \EE \left[\prod_{ij \in E(H)} W(x_i, x_j)\right] = \int_{[0,1]^{v(H)}} \prod_{ij \in E(H)} W(x_i, x_j) \ d\mu^{v(H)},\]
where $\mu$ is the Lebesgue measure on $[0,1]$. Our concern in this paper will be with the following question: for which ordered pairs of graphs $(H, H')$ is it the case that $H$ \emph{dominates} $H'$, in the sense that
\[t_H(W)^{1/e(H)} \geq t_{H'}(W)^{1/e(H')}\]
for all graphons $W$? 

This question has been studied both implicitly and explicitly for decades. For instance, a celebrated conjecture of Sidorenko~\cite{S93, S932} says that $H$ dominates $K_2$ if and only if $H$ is bipartite. The necessity of the bipartiteness condition is simple to verify, but its sufficiency is a difficult problem which has attracted a great deal of attention in recent years~\cite{CFS10, CKLL, CL19, C22, CR21, KLL16, LiSz, Sz14}.

One of the oldest results in this direction is a result of Godsil that first appeared in a paper of Erd\H{o}s and Simonovits~\cite{ES82}. If we write $P_n$ for the path with $n$ vertices, his result says that $P_n$ dominates $P_m$ whenever $2 \leq m \leq n$ and $n$ is odd. That is, even-length paths dominate all of their connected subgraphs and this easily extends to all subgraphs. In what follows, we will refer to a graph $H$ with this property, that $H$ dominates $H'$ for all subgraphs $H'$ of $H$ with at least one edge, as a \emph{dominating graph}.

The only other graphs known to have this property are the weakly norming graphs whose study was initiated by Lov\'asz~\cite{L08} and Hatami~\cite{H10}. 
The fact that all weakly norming graphs are dominating was proved by Hatami~\cite{H10}. 
Until quite recently, only a handful of weakly norming graphs were known~\cite{H10, L12}, but a much broader collection of examples was found by the authors~\cite{CL17}, who showed how to associate a family of weakly norming graphs to any finite reflection group. The purpose of this paper is to show that the family of dominating graphs is broader still. In particular, we will show that both previous families of examples, weakly norming graphs and even-length paths, can be placed within a common framework.

One of the main results in~\cite{CL17} says that if there is a sequence of what we might call reflections, each of which takes a subgraph $J$ of a graph $H$ and produces another subgraph $J'$, 
that starts with a single edge and ends with $H$, then $H$ is weakly norming. The corresponding result here allows a second operation, which we call relocation, and says that if, through a sequence of reflections and relocations, one can start with a single edge and get to a graph $H$, then $H$ is dominating. 
In~\cite{CL17}, our results can be neatly packaged in terms of what we call reflection graphs. Unfortunately, there seems to be no such clean statement here.  
Instead, we will apply our reflection/relocation mechanism to produce 
a variety of examples,
each derived in some way from a reflection group. For now, we illustrate our results with just one family of examples, which, by a recent result of Sidorenko~\cite{Sunpub}, are known to not be weakly norming. Recall that the {\it $1$-subdivision} of a graph $H$ is the graph obtained by replacing each edge of $H$ by a path with two edges.

\begin{theorem} \label{thm:examp}
The $1$-subdivision of the complete bipartite graph $K_{t,t}$ is dominating.
\end{theorem}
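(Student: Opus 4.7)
The plan is to reduce Theorem~\ref{thm:examp} to the general principle announced in the introduction: any graph that can be built from a single edge by a sequence of reflection and relocation operations is dominating. With that in hand, what remains is purely combinatorial, namely to exhibit an explicit such construction sequence for the $1$-subdivision $H$ of $K_{t,t}$.

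I would first fix notation, writing $A=\{a_1,\dots,a_t\}$ and $B=\{b_1,\dots,b_t\}$ for the two sides of the underlying $K_{t,t}$ and $v_{ij}$ for the subdivision vertex on the edge $a_ib_j$, so that $H$ has vertex set $A\cup B\cup \{v_{ij}\}$ and edges $a_iv_{ij}$ and $v_{ij}b_j$, each $v_{ij}$ having degree $2$. The strategy is to adapt the reflection-group construction of $K_{t,t}$ from \cite{CL17}, where the reflection group $S_t\times S_t$ acts by row and column transpositions on the $t\times t$ array of edges, but to interleave the group-theoretic reflections with relocation steps that introduce and glue the subdivision vertices. Concretely, starting from the edge $a_1v_{11}$, one reflection at $v_{11}$ produces the path $a_1v_{11}b_1$. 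I would then proceed inductively: each simple transposition in one factor of $S_t\times S_t$ is translated into a reflection of the current subgraph that produces a fresh row or column of subdivided edges, followed, whenever the reflection creates two distinct copies of a vertex of $A\cup B$ that should have been identified, by a relocation merging them. That a genuinely non-reflection step is needed is consistent with Sidorenko's recent result \cite{Sunpub} that $H$ is not weakly norming, so that no sequence of pure reflections starting from an edge can reach $H$.

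The main obstacle is verifying that each relocation is legal at the point we wish to apply it: the subgraph being relocated must really sit inside the running graph in the required configuration, and after the move the result must still embed into $H$. Keeping careful track of which vertices are ``active'' (and therefore eligible for identification) through a long sequence of reflections is what will drive the bookkeeping. A cleaner alternative that I would pursue in parallel is to extract a product-type closure lemma from the general framework: if two graphs $H_1$ and $H_2$ can each be built from an edge by reflections and relocations, then so can a natural ``subdivided product'' of them. Since the $1$-subdivision of $K_{t,t}$ decomposes as a product of simpler subdivided bipartite pieces, an induction based on such a lemma would reduce the theorem to the case $H=P_3$, which follows from a single reflection of an edge.
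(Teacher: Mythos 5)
You have correctly identified the engine (the layered percolation plus relocation machinery of Theorem~\ref{thm:perc}), but your proposed use of it does not match how the operations are actually defined, and the one genuinely technical step is missing. In Section~\ref{sec:perc} everything happens inside the fixed target graph: a reflection is a half-folding map attached to a layer-preserving cut involution of $H$ and sends an edge subset of $H$ to another edge subset of $H$, while a relocation is applied only to a union of complete layers $E(H_I)$ and replaces it by a subgraph of $H$ that \emph{dominates} $H_I$ and contains more seeds. No step ever creates new vertices or identifies two copies of a vertex, so your inductive scheme of generating a fresh row or column and then ``merging'' duplicated vertices of $A\cup B$ by a relocation is not a sequence of legal moves, and relocations cannot be interleaved after individual reflections in the way you describe. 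Moreover, a layered percolating sequence does not start from a single edge: for the $1$-subdivision of $K_{t,t}$ the natural layers are the edges meeting $A$ and the edges meeting $B$, so $J_0$ must consist of two seeds forming a path $a_i\,v_{ij}\,b_j$. Your opening move, a ``reflection at $v_{11}$'' turning $\{a_1v_{11}\}$ into $\{a_1v_{11},v_{11}b_1\}$, would require a cut involution fixing $v_{11}$ and swapping $a_1$ with $b_1$, and for $t\ge 3$ no such involution exists (its fixed set would be forced to contain every subdivision vertex $v_{kj}$ with $k,j\ge 2$, which would force $\phi(a_k)=b_j$ for all $j\ge 2$ simultaneously).

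The real content of the paper's argument, for which your sketch offers no substitute, is the verification that the layers can be relocated. Each layer of the subdivision is a disjoint union of $t$ stars $K_{1,t}$ centred on one side of $K_{t,t}$; the natural relocation moves the star at a vertex $a\in A$ to a star centred at a vertex $b\in B$, but the resulting subgraph is \emph{not} an isomorphic copy of the layer, because the new star shares its leaves (subdivision vertices) with the stars at the neighbours of $b$. One therefore needs an actual domination inequality, namely that this edge-overlapping union of stars dominates the disjoint union of stars; the paper imports this from Theorem~2.7 of~\cite{L19} and, with it, deduces Theorem~\ref{thm:examp} from the more general Theorem~\ref{thm:K2t-replace} on $K_{2,t}$-replacements of regular reflection graphs. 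Your fallback ``subdivided product closure lemma'' is likewise unsupported: the paper shows that dominating graphs are not closed under tensor products (the $C_6^+\times C_6^+$ example), so any product-type closure statement would itself require proof rather than an appeal to the general framework.
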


The rest of the paper is organised as follows. In the next section, we prove some necessary conditions for a graph $H$ to dominate another graph $H'$, thereby also giving some conditions that must be satisfied by any dominating graph. In Section~\ref{sec:perc}, we describe our reflection/relocation mechanism and show that it produces dominating graphs. We then use this mechanism in Section~\ref{sec:examples} to find new examples of dominating graphs, including those mentioned in Theorem~\ref{thm:examp}. We talk about applications of the domination property in Section~\ref{sec:apps}, before concluding with some further remarks and open problems.

\section{Necessary conditions for domination} \label{sec:nec}

As we remarked in the introduction, any dominating graph must be bipartite. Here we establish some more necessary conditions.
First, we show that any connected dominating graph must be $1$-balanced, in the sense that $\frac{e(H)}{v(H) - 1} \geq \frac{e(H')}{v(H') - 1}$ for every subgraph $H'$ of $H$. The proof is essentially due to Hatami~\cite{H10}, who proved an analogous result for weakly norming graphs.

\begin{proposition}\label{prop:1-balanced}
Let $H$ and $H'$ be connected graphs.
If $H$ dominates $H'$, then $\frac{e(H)}{v(H) - 1} \geq \frac{e(H')}{v(H') - 1}$.
\end{proposition}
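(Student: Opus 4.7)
My plan is to test the domination inequality $t_H(W)^{1/e(H)} \ge t_{H'}(W)^{1/e(H')}$ on a one-parameter family of graphons $(W_p)_{p \in (0, 1/2]}$ for which $t_F(W_p)$ scales like $p^{v(F)-1}$ whenever $F$ is connected, and then send $p \to 0^+$ to extract the ratio inequality.

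Concretely, take $W_p(x,y) = \mathbf{1}\sqb{\min(|x-y|, 1-|x-y|) \leq p}$, the band graphon on the circle $[0,1] \cong \mathbb{R}/\mathbb{Z}$. The key estimate is that, for every connected graph $F$ and every $p \in (0,1/2]$,
\[
p^{v(F)-1} \;\leq\; t_F(W_p) \;\leq\; (2p)^{v(F)-1}.
\]
For the upper bound, pick any spanning tree $T$ of $F$, bound the non-tree edge indicators by $1$, and integrate out leaves of $T$ iteratively; each such integration contributes a factor of $2p$, the measure of a circular band of width $p$, and exactly $v(F)-1$ such factors arise. For the lower bound, use translation invariance of $W_p$ on the circle: let the root coordinate $x_1$ range freely over $[0,1]$, and restrict every other $x_i$ to lie within circular distance $p/2$ of $x_1$. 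Every pairwise circular distance is then at most $p$, so every edge indicator equals $1$, and this event has measure $p^{v(F)-1}$.

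Substituting these two-sided bounds into the logarithm of the domination inequality applied to $H$ and $H'$ yields
\[
\paren{\frac{v(H)-1}{e(H)} - \frac{v(H')-1}{e(H')}} \log p \;\geq\; C_{H,H'}
\]
for some constant $C_{H,H'}$ depending only on $H$ and $H'$. Since $\log p \to -\infty$ as $p \to 0^+$, the coefficient of $\log p$ on the left must be nonpositive, which rearranges to $e(H)/(v(H)-1) \geq e(H')/(v(H')-1)$.

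The main obstacle is the two-sided estimate $t_F(W_p) = \Theta_F(p^{v(F)-1})$; once this is in place, the passage to the limit is routine. The translation invariance of $W_p$ is genuinely needed for the lower bound: a band graphon on the interval $[0,1]$ would lose a factor of $p$ to boundary effects and only give $t_F(W_p) \geq p^{v(F)}$, which is not strong enough to match the upper exponent and hence not strong enough to pin down the ratio $(v(F)-1)/e(F)$ in the limit.
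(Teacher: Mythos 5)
Your proof is correct, and it is the same idea as the paper's in spirit---test the domination inequality against a one-parameter family of graphons supported near the diagonal, so that $t_F$ scales like $(\text{small parameter})^{v(F)-1}$ for connected $F$, and let the parameter tend to $0$---but the concrete construction differs. The paper takes the block graphon $W=\sum_{k=1}^n \mathbf{1}_{I_{n,k}}$ (disjoint diagonal squares of side $1/n$), for which $t_F(W)=n^{-(v(F)-1)}$ holds \emph{exactly} for connected $F$; the ratio inequality then drops out immediately from $n^{-(v(H)-1)/e(H)}\ge n^{-(v(H')-1)/e(H')}$ with no estimates or limiting argument. Your circular band graphon instead needs the two-sided bound $p^{v(F)-1}\le t_F(W_p)\le (2p)^{v(F)-1}$ (your spanning-tree upper bound and concentration lower bound are both fine) together with a $p\to 0^+$ limit to kill the constant; this is slightly more work but equally valid, and the spanning-tree computation is a reusable technique. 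One small inaccuracy in your closing remark: translation invariance is not actually needed. On the interval, taking $W_p(x,y)=\mathbf{1}[|x-y|\le p]$, one can let $x_1$ range over $[0,1-p]$ and confine the remaining vertices to $[x_1,x_1+p]$, giving $t_F(W_p)\ge (1-p)\,p^{v(F)-1}$, which is the same exponent up to a harmless constant; the boundary only costs a factor $1-p$, not a factor of $p$. This does not affect your proof, since the circle version you use is correct as written.
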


\begin{proof}
For $1 \leq k \leq n$, let $I_{n,k}$ be the square-shaped subset $\{(x,y):(k-1)/n<x,y<k/n\}$ of the unit square and 
let $W$ be the block graphon given by $W =\sum_{k=1}^n \mathbf{1}_{I_{n,k}}$. 
Then, since both $H$ and $H'$ are connected, 
\begin{align*}
    t_H(W) = \sum_{k=1}^n n^{-v(H)} = n^{-(v(H)-1)}~\text{ and }~t_{H'}(W) = \sum_{k=1}^n n^{-v(H')} = n^{-(v(H')-1)}.
\end{align*}
Thus, the domination inequality $t_H(W)^{1/e(H)}\geq t_{H'}(W)^{1/e(H')}$ can be rewritten as
\[n^{-(v(H)-1)/e(H)} \geq n^{-(v(H')-1)/e(H')},\]
which yields the desired inequality $\frac{e(H)}{v(H) - 1} \geq \frac{e(H')}{v(H') - 1}$.
\end{proof}

\begin{corollary} \label{cor:bal}
If $H$ is a connected dominating graph, then $\frac{e(H)}{v(H) - 1} \geq \frac{e(H')}{v(H') - 1}$ for every subgraph $H'$ of $H$.
\end{corollary}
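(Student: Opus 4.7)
The plan is to reduce the statement to Proposition~\ref{prop:1-balanced}, whose hypothesis requires both $H$ and $H'$ to be connected. The principal obstacle is therefore that an arbitrary subgraph $H'$ of $H$ may be disconnected, so Proposition~\ref{prop:1-balanced} does not apply directly. The key step is to show that some edge-containing connected component of $H'$ has $1$-density $e(\cdot)/(v(\cdot)-1)$ at least as large as that of $H'$ itself, after which the corollary follows at once.

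Concretely, I would first dispose of the case $e(H')=0$, where the claim $\frac{e(H)}{v(H)-1}\geq 0$ is trivial. Assuming $e(H')\geq 1$, let $C_1,\dots,C_k$ be those connected components of $H'$ that contain at least one edge, and set $n_i:=v(C_i)$ and $m_i:=e(C_i)$, so $\sum_i m_i=e(H')$ and $\sum_i n_i\leq v(H')$, with slack accounting for any isolated vertices of $H'$. The mediant inequality applied to the positive fractions $m_i/(n_i-1)$ then produces some index $j$ with
\[
\frac{m_j}{n_j-1}\;\geq\;\frac{\sum_i m_i}{\sum_i (n_i-1)}\;=\;\frac{e(H')}{\sum_i n_i-k}\;\geq\;\frac{e(H')}{v(H')-1},
\]
where the last inequality uses $k\geq 1$ together with $\sum_i n_i\leq v(H')$.

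Since $C_j$ is a connected subgraph of $H$ with at least one edge, the assumption that $H$ is dominating gives that $H$ dominates $C_j$, and Proposition~\ref{prop:1-balanced} then yields $\frac{e(H)}{v(H)-1}\geq \frac{m_j}{n_j-1}$. Chaining with the display above produces the desired bound. The only nontrivial step is the mediant reduction to a connected component; once that is in place, the corollary is an immediate consequence of Proposition~\ref{prop:1-balanced}.
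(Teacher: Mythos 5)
Your proposal is correct and follows essentially the same route as the paper: reduce to a connected, edge-containing component of $H'$ whose $1$-density $e(\cdot)/(v(\cdot)-1)$ is at least that of $H'$, and then apply Proposition~\ref{prop:1-balanced} to that component. Your mediant-inequality formulation is, if anything, a slightly cleaner rendering of the paper's convex-combination computation, since it explicitly sets aside the trivial case $e(H')=0$ and the isolated-vertex components.
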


\begin{proof}
By Proposition~\ref{prop:1-balanced}, we may assume that $H'$ is not connected.
 Let $F_1,F_2,\dots,F_t$ be the connected components of $H'$ and let $d_i:=\frac{e(F_i)}{v(F_i)-1}$.
Then
\begin{align*}
    \frac{e(H')}{v(H')-1} = \frac{\sum_i d_i(v(F_i)-1)}{\sum_i (v(F_i) -1/t)} 
    < \frac{\sum_i d_i(v(F_i)-1/t)}{\sum_i (v(F_i) -1/t)}, 
\end{align*}
and, hence, $\frac{e(H')}{v(H')-1}$ is smaller than a convex combination of the $d_i$. Therefore, there must be a component $F_j$ with $d_j>\frac{e(H')}{v(H')-1}$. Hence, since, by Proposition~\ref{prop:1-balanced}, $\frac{e(H)}{v(H) - 1} \geq \frac{e(F)}{v(F) - 1}$ for every connected $F$, we have $\frac{e(H)}{v(H) - 1}\geq d_j$, which implies that $\frac{e(H)}{v(H) - 1} \geq \frac{e(H')}{v(H') - 1}$ for every subgraph $H'$. 
\end{proof}

As pointed out in~\cite{GHL22}, this result is not true if $H$ is not connected, as may be seen by considering the weakly norming, and hence dominating, graph consisting of two disjoint copies of $K_{1,2}$ and the subgraph consisting of a single copy of $K_{1,2}$.

Our second condition states that every connected dominating graph has the property that every vertex on the smaller side of the bipartition has the same degree. The proof is again similar to a result of Hatami~\cite{H10}, who showed that every connected weakly norming graph is bi-regular, i.e., all vertices on the same side of the bipartition have the same degree. This stronger property does not necessarily hold for dominating graphs, since, as we shall see, there are many examples, including even-length paths and the example $C_6^+$ in Figure~\ref{fig:base}, which are not bi-regular.

\newdimen\R
\R=2cm

\begin{figure}
\centering
\begin{tikzpicture}
        \draw (0:\R) \foreach \x in {60,120,...,359} {
                -- (\x:\R)
            }-- cycle (90:\R) node[above] {} ;
        \draw (0:0)--(0:\R);
        \draw (0:0)--(120:\R);     
        \draw (0:0)--(240:\R);          
\end{tikzpicture}
\caption{The graph $C_6^+$, which is dominating but not weakly norming.} \label{fig:base}
\end{figure}
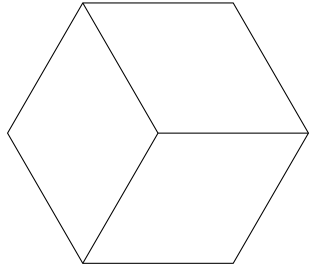

\begin{proposition}\label{prop:regular}
Let $H$ be a connected dominating graph with bipartition $A\cup B$. If $|A|\leq|B|$, then every vertex $v\in A$ must have the same degree. Moreover, the maximum degree $\Delta$ of $H$ is attained by the vertices in $A$.
\end{proposition}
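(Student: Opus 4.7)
The plan is to test the domination inequality $t_H(W)^{1/e(H)} \geq t_{K_{1,\Delta}}(W)^{1/\Delta}$ against a one-parameter family of highly lopsided bipartite graphons and then compare leading orders as the parameter tends to zero. This inequality is available because $H$ dominates every subgraph and $K_{1,\Delta}$ is a subgraph of $H$ (take any vertex of degree $\Delta$ together with its neighbours).

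For each $\alpha\in(0,1)$, let $W_\alpha$ be the complete bipartite graphon between the parts $X=[0,\alpha]$ and $Y=[\alpha,1]$; that is, $W_\alpha(x,y)=1$ on $(X\times Y)\cup(Y\times X)$ and $0$ elsewhere. Since $H$ is bipartite with parts $(A,B)$, only the two homomorphism types $A\to X,\, B\to Y$ and $A\to Y,\, B\to X$ contribute, so
\[ t_H(W_\alpha)=\alpha^{|A|}(1-\alpha)^{|B|}+\alpha^{|B|}(1-\alpha)^{|A|}\qquad\text{and}\qquad t_{K_{1,\Delta}}(W_\alpha)=\alpha(1-\alpha)^{\Delta}+(1-\alpha)\alpha^{\Delta}. \]
Under the hypothesis $|A|\leq|B|$, the first expression is asymptotically a positive constant times $\alpha^{|A|}$ as $\alpha\to 0^+$, while the second is asymptotically a positive constant times $\alpha$.

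Dividing both sides of the domination inequality by $\alpha^{1/\Delta}$ and letting $\alpha\to 0^+$, the right side tends to a positive constant, while the left side behaves like a positive constant times $\alpha^{|A|/e(H)-1/\Delta}$. For the inequality to survive in the limit, the exponent must be non-positive, which gives
\[\Delta\cdot|A|\leq e(H).\]
On the other hand, since $\Delta$ is the maximum degree of $H$, we always have $e(H)=\sum_{a\in A}\deg_H(a)\leq\Delta\cdot|A|$, with equality if and only if every $a\in A$ has degree exactly $\Delta$. Combining the two inequalities forces both to be equalities, so every vertex of $A$ has the same degree $\Delta$, and in particular the maximum degree of $H$ is attained on $A$.

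The only delicate point is turning the one-parameter family of graphon inequalities into a comparison of exponents; everything else is routine computation. Conceptually, the argument parallels Hatami's proof of bi-regularity for weakly norming graphs, the decisive modification being that we compare $H$ against the star $K_{1,\Delta}$ rather than against $K_2$, which is exactly what lets us single out the smaller side of the bipartition.
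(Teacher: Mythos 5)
Your proof is correct and follows essentially the same route as the paper: the same lopsided two-block bipartite graphon, the same comparison of $H$ against the star $K_{1,\Delta}$, and the same conclusion $\Delta|A| \leq e(H) \leq \Delta|A|$ forcing all degrees on $A$ to equal $\Delta$. The only cosmetic difference is that you extract the exponent comparison via a limit as $\alpha\to 0^+$, whereas the paper derives the same inequality by bounding $t_S(W)\geq \varepsilon/2^{\Delta}$ and $t_H(W)\leq 2\varepsilon^{|A|}$ and noting a contradiction for small $\varepsilon$.
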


\begin{proof}
Let $\varepsilon<1/2$ be positive and let $I$ and $J$ be the two disjoint intervals $[0,\varepsilon)$ and
$[\varepsilon,1]$, respectively. Denote by $W$ the graphon $\mathbf{1}_{I\times J}+\mathbf{1}_{J\times I}$. Denote by $S$ the star with $\Delta$ leaves, which is clearly a subgraph of $H$. Then
\begin{align*}
    t_H(W) = \varepsilon^{|A|}(1-\varepsilon)^{|B|}+\varepsilon^{|B|}(1-\varepsilon)^{|A|}~~\text{ and }~~t_S(W) = \varepsilon(1-\varepsilon)^{\Delta}+\varepsilon^{\Delta}(1-\varepsilon).
\end{align*}
Since $\varepsilon<1/2$, $t_S(W)\geq \varepsilon/2^{\Delta}$, so the inequality $t_H(W)\geq t_S(W)^{e(H)/\Delta}$, which holds since $H$ is dominating, gives
\begin{align*}
    2^{-e(H)}\varepsilon^{e(H)/\Delta}\leq t_S(W)^{e(H)/\Delta}\leq t_H(W)\leq 2\varepsilon^{|A|}.
\end{align*}
But this gives a contradiction for $\varepsilon$ sufficiently small 
unless $|A|\leq e(H)/\Delta$. On the other hand, $e(H)\leq \Delta |A|$. Therefore, $e(H)=\Delta|A|$, which means that every vertex in $A$ has degree $\Delta$.
\end{proof}

As a corollary, we note that if the two sides of the bipartition of a connected dominating graph $H$ have the same size, i.e., $|A|=|B|$, then the graph must be regular.

The reason we have largely focused on connected dominating graphs is because of the following result, which says that the components of any dominating graph must all be the same. This generalises, and gives a simpler proof of, a similar result for weakly norming graphs proved by Garbe, Hladk\'y and Lee~\cite{GHL22}.

\begin{lemma}
A graph without isolated vertices is dominating if and only if each of its components is the same dominating graph.
\end{lemma}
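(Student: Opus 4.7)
The plan is to prove both directions via the multiplicativity $t_H(W) = \prod_i t_{F_i}(W)$ for a disjoint union $H = F_1 \sqcup \cdots \sqcup F_t$ with components $F_i$. Setting $a_i(W) := \frac{\log t_{F_i}(W)}{e(F_i)}$, this multiplicativity translates into the identity that $\frac{\log t_H(W)}{e(H)}$ is the $e(F_i)$-weighted average of the numbers $a_i(W)$; this single observation will drive both implications.

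For the \emph{if} direction, suppose every component of $H$ is isomorphic to a single dominating graph $F$, so that $H = F^{\sqcup t}$ and $\frac{\log t_H(W)}{e(H)} = \frac{\log t_F(W)}{e(F)}$. Any subgraph $H' \subseteq H$ with $e(H') \geq 1$ splits along the components of $H$ as $H' = \bigsqcup_i H_i'$ with each $H_i' \subseteq F$. Since $F$ is dominating, for every $i$ with $e(H_i') \geq 1$ we have $t_{H_i'}(W) \leq t_F(W)^{e(H_i')/e(F)}$; multiplying these bounds over $i$ and taking the $1/e(H')$-th root gives exactly $t_{H'}(W)^{1/e(H')} \leq t_F(W)^{1/e(F)} = t_H(W)^{1/e(H)}$. (The borderline case $t_F(W) = 0$ forces $W \equiv 0$ a.e.\ via domination of $F$ over an edge, and the inequality is then trivial.)

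For the \emph{only if} direction, the key observation is that each component $F_j$ is a subgraph of $H$, so the domination hypothesis gives $\frac{\log t_H(W)}{e(H)} \geq a_j(W)$ for every $j$ and every $W$. In view of the weighted-average identity, the $e(F_i)$-weighted mean of the numbers $a_i(W)$ is then at least the maximum of those numbers; since a weighted average with positive weights can equal its maximum only when all terms coincide, one concludes $a_j(W) = a_k(W)$ for all $j, k$ and all $W$. Rewriting this as $t_{F_j}(W)^{e(F_k)} = t_{F_k}(W)^{e(F_j)}$ for all graphons $W$ and invoking Lov\'asz's theorem that homomorphism counts determine a graph up to isomorphism yields $F_j^{\sqcup e(F_k)} \cong F_k^{\sqcup e(F_j)}$, which on comparing connected components forces $F_j \cong F_k$. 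To conclude that the common component $F$ is itself dominating, take any $F' \subseteq F$ with $e(F') \geq 1$ and apply the domination of $H$ to the subgraph $F' \sqcup F^{\sqcup (t-1)} \subseteq H$; using $\frac{\log t_H(W)}{e(H)} = \frac{\log t_F(W)}{e(F)}$, a short rearrangement reduces the resulting inequality to $t_F(W)^{1/e(F)} \geq t_{F'}(W)^{1/e(F')}$. I expect the weighted-average step to be the main conceptual subtlety, since one must extract equality of all the $a_j(W)$ from a family of one-sided inequalities; the rest is routine bookkeeping together with an appeal to Lov\'asz's reconstruction theorem.
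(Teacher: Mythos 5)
Your proof is correct and takes essentially the same route as the paper: multiplicativity of $t$ over components, the domination inequality applied to component subgraphs to force $t_{F_j}(W)^{1/e(F_j)}$ to agree for all $j$ (the paper does this via a one-component-versus-the-rest split, which is the same computation as your weighted-log average), and Lov\'asz's uniqueness theorem to identify the components. The final step is also the paper's argument, except that it uses the subgraph $F'^{\sqcup t}$ instead of your $F'\sqcup F^{\sqcup(t-1)}$, which avoids dividing by $t_F(W)^{t-1}$ and hence the $t_F(W)=0$ borderline case you leave implicit.
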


\begin{proof}
It easy to check that the disjoint union of multiple copies of the same dominating graph is dominating, so we will focus on the opposite direction. To this end, 
let $H$ be a dominating graph with no isolated vertices and at least two components and write $H = H_1 \cup H_2$, where $H_1$ is a single component and $H_2$ is the union of the remaining components. Then $t_H(W)^{1/e(H)} \geq t_{H_1}(W)^{1/e(H_1)}$ is equivalent to $t_{H_2}(W)^{1/e(H_2)} \geq t_{H_1}(W)^{1/e(H_1)}$, where we used that $t_H(W)= t_{H_1}(W) t_{H_2}(W)$. Similarly, $t_H(W)^{1/e(H)} \geq t_{H_2}(W)^{1/e(H_2)}$ is equivalent to $t_{H_1}(W)^{1/e(H_1)} \geq t_{H_2}(W)^{1/e(H_2)}$. Hence, if $H$ is dominating, we must have that $t_{H_1}(W)^{1/e(H_1)} = t_{H_2}(W)^{1/e(H_2)}$ for all $W$. But this is the same as saying that $t_{H_1}(W)^{e(H_2)} = t_{H_2}(W)^{e(H_1)}$ for all $W$, which in turn implies that $t_{e(H_2) H_1}(W) = t_{e(H_1) H_2}(W)$ for all $W$. But, by~\cite[Theorem 5.29]{L12}, this implies that $e(H_2) H_1 = e(H_1) H_2$, so $H_2$ must be a union of copies of $H_1$. To verify that $H_1$ is itself dominating, suppose that $H = rH_1$ and consider the subgraph $H' = rH'_1$, where $H'_1$ is a subgraph of $H_1$. Then, since $H$ is dominating,
\[t_{H_1}(W)^{1/e(H_1)} = t_{rH_1}(W)^{1/re(H_1)} = t_H(W)^{1/e(H)} \geq t_{H'}(W)^{1/e(H')} = t_{rH'_1}(W)^{1/re(H'_1)} = t_{H'_1}(W)^{1/e(H'_1)}\]
for all $W$, implying that $H_1$ is indeed dominating.
\end{proof}

\section{Layered percolation} \label{sec:perc}

Given a connected graph $H$, we say that an automorphism $\phi$ of $H$ is a \emph{cut involution} if the following conditions hold:

\begin{enumerate}

\item[(i)] $\phi$ is an involution, that is, $\phi = \phi^{-1}$;

\item[(ii)] the fixed point set $F_\phi = \{v \in V(H) : \phi(v) = v\}$ is a vertex cut of $H$;

\item[(iii)] $V(H) \setminus F_\phi$ is the disjoint union of sets $L_\phi$ and $R_\phi$, where there are no edges between $L_\phi$ and $R_\phi$ and they are mapped to one another under $\phi$.\footnote{This last condition was not stated explicitly in our earlier work~\cite{CL17}, though it was used implicitly throughout.}
\end{enumerate}
The \emph{cut involution group} $W_H$ is then the group of automorphisms of $H$ generated by its cut involutions.

Suppose now that a particular choice for $L_\phi$ and $R_\phi$ has been made for each cut involution $\phi$ of $H$. We define the \emph{left-folding map} $\phi^+: V(H) \rightarrow V(H)$ of a cut involution $\phi$ of $H$ by 
\begin{align*}
	\phi^+(v)=
	\begin{cases}
		\phi(v) ~&\text{ if }v\in R_\phi\\
		v &\text{ if }v\in L_\phi\cup F_\phi
	\end{cases}
\end{align*}
and, similarly, define the \emph{right-folding map} $\phi^-$ by swapping the roles of $L_\phi$ and $R_\phi$. For brevity, we refer to both the left- and right-folding maps as \emph{half-folding maps} of $\phi$. The \emph{conjugate} $\overbar{\psi}$ of a half-folding map $\psi$ is then the other half-folding map defined by the same cut involution.

If $J$ is an edge subset of $H$ and $\phi$ is a cut involution of $H$, we define two edge sets $J^+(\phi)$ and $J^-(\phi)$ by
\begin{align*}
	J^+(\phi)=\{e\in E(H):\phi^+(e)\in J\}\text{ and }
	J^-(\phi)=\{e\in E(H):\phi^-(e)\in J\}.
\end{align*}
That is, $J^+(\phi)$ is the graph formed by copying the edges of $J$ from the left half onto the right half, while $J^-(\phi)$ copies the edges from the right half onto the left half. The maps $J \mapsto J^+(\phi)$ and $J \mapsto J^-(\phi)$ are the `reflections' mentioned in the introduction. In what follows, we will sometimes abuse notation slightly by writing $J(\phi^+)$ for $J^+(\phi)$ and $J(\phi^-)$ for $J^-(\phi)$. 

Let $J_0,J_1,J_2,\dots$ be a sequence of edge subsets of $H$. We say that it is a \emph{folding sequence} in $H$ if, for each $i\geq 1$, $J_i=J_{i-1}(\psi_i)$ 
for some half-folding map $\psi_{i}$, calling the corresponding sequence of half-folding maps $(\psi_i)_{i=1}^N$ the \emph{signature} of the folding sequence.
If a finite folding sequence $J_0,J_1,\dots,J_N$ in a graph $H$ 
starts from a set $J_0$ consisting of a single edge and ends with $J_N=E(H)$,
then we call it a \emph{percolating sequence}.
The reason for these definitions is that such folding sequences allow us to keep track of repeated applications of the Cauchy--Schwarz inequality. Indeed, a simple application of the Cauchy--Schwarz inequality gives
\begin{align}\label{eq:CS}
    t_{J_i}(W)\leq t_{J_i^+(\phi)}(W)^{1/2}t_{J_i^-(\phi)}(W)^{1/2},
\end{align}
where here we identify the edge sets $J_i$, $J_i^+(\phi)$ and $J_i^-(\phi)$ with their corresponding subgraphs. 
With this terminology in place, we may recall one of the main results in~\cite{CL17}, which says that if there is a percolating sequence in  a graph $H$, then, through appropriate repeated applications of~\eqref{eq:CS}, we have that $H$ is weakly norming. 

\begin{theorem}[Conlon--Lee] \label{thm:colouring_graphs}
	If there exists a percolating sequence $J_0,J_1,\dots,J_N$, then $H$ is weakly norming.
\end{theorem}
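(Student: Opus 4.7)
My plan is to prove $H$ is weakly norming by establishing the equivalent H\"older-type inequality
\[t_H(\{W_e\}_{e \in E(H)}) \leq \prod_{e \in E(H)} t_H(W_e)^{1/e(H)}\]
for any non-negative symmetric measurable graphons $\{W_e\}$ indexed by the edges of $H$. The first step is to promote the Cauchy--Schwarz inequality~\eqref{eq:CS} to an edge-decorated analogue: for any cut involution $\phi$ of $H$ and edge labeling $\mathbf{W} = \{W_e\}$,
\[t_H(\mathbf{W}) \leq t_H(\mathbf{W}^+(\phi))^{1/2}\, t_H(\mathbf{W}^-(\phi))^{1/2},\]
where $\mathbf{W}^\pm(\phi)_e := W_{\phi^\pm(e)}$. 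This follows from conditioning on the integration variables associated with the fixed-point set $F_\phi$ and applying the ordinary Cauchy--Schwarz in those variables; the labels get transported via $\phi^\pm$ because we are identifying one side of the cut with a $\phi$-reflected copy of the other.

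Next, I iteratively apply this edge-decorated Cauchy--Schwarz along the percolating sequence, traversed in reverse, using the cut involutions $\phi_N, \phi_{N-1}, \dots, \phi_1$ underlying the half-folding maps $\psi_N, \dots, \psi_1$. After $N$ applications, this produces a bound of the form
\[t_H(\mathbf{W})^{2^N} \leq \prod_{\epsilon \in \{+,-\}^N} t_H(\mathbf{W}^\epsilon),\]
where each leaf labeling $\mathbf{W}^\epsilon$ is obtained by composing the half-folding transports dictated by the sign pattern $\epsilon$.

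The key combinatorial input supplied by the percolating sequence is that the cut involution group $W_H$ acts transitively on $E(H)$: by the definition of the sequence, for any edge $e \in E(H) = J_N$ the composition $\psi_1 \circ \cdots \circ \psi_N$ sends $e$ into $J_0 = \{e_0\}$, and since each $\psi_i$ acts on edges as the identity or as $\phi_i \in W_H$, every edge of $H$ lies in the $W_H$-orbit of $e_0$. Using this transitivity together with a careful accounting of how the compositions $\phi_1^{\epsilon_1} \circ \cdots \circ \phi_N^{\epsilon_N}$ act on $E(H)$, one shows that the product $\prod_\epsilon t_H(\mathbf{W}^\epsilon)$ is controlled by $\prod_e t_H(W_e)^{2^N/e(H)}$. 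Taking a $2^N$-th root then yields the desired H\"older inequality, which by Hatami's characterization is equivalent to $H$ being weakly norming.

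\textbf{Main obstacle.} The technical heart of the proof is this last accounting step: the combinatorics of the half-folding compositions must interact with $W_H$-transitivity in exactly the right way to produce the balanced exponents $2^N/e(H)$ on the right-hand side. The individual leaf labelings $\mathbf{W}^\epsilon$ need not be constant, and the precise manner in which the $2^N$ leaves aggregate into the correct H\"older form is delicate; it relies on the symmetry structure of $W_H$ and on the fact that the percolating sequence provides a coherent schedule for reaching every edge of $H$ from the initial edge $e_0$.
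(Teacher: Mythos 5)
Your setup is fine: the edge-decorated Cauchy--Schwarz step (the decorated analogue of~\eqref{eq:CS}) and the reduction of weak norming to the H\"older-type inequality via Hatami's characterisation are exactly the right ingredients, and indeed the paper does not prove this theorem at all but quotes it from~\cite{CL17}, where the proof begins the same way. The genuine gap is the final ``accounting'' step, which you yourself flag as the obstacle: after a single pass of $N$ decorated Cauchy--Schwarz steps, the $2^N$ leaf decorations $\mathbf{W}^{\epsilon}$ are in general \emph{not} constant, and bounding $t_H$ of a mixed decoration by a product of the pure quantities $t_H(W_e)$ is precisely the statement being proven, so the claimed bound $\prod_{\epsilon} t_H(\mathbf{W}^{\epsilon}) \leq \prod_e t_H(W_e)^{2^N/e(H)}$ is not something transitivity of the cut involution group can deliver; moreover the uniform exponents $2^N/e(H)$ are not what one pass produces. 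What the percolating sequence actually guarantees is that exactly one branch becomes constant: decorations transform by precomposition (the child of pattern $\mathbf{c}$ under $\psi$ is $\mathbf{c}\circ\psi$), so applying the steps in \emph{chronological} order gives the branch decoration $\mathbf{c}\circ\psi_1\circ\cdots\circ\psi_N$, which is constantly $\mathbf{c}(e_0)$ because $J_i=J_{i-1}(\psi_i)$ forces $\psi_1\circ\cdots\circ\psi_N\equiv e_0$. (Your reversed order $\phi_N,\dots,\phi_1$ produces $\mathbf{c}\circ\psi_N\circ\cdots\circ\psi_1$ on that branch, which need not be constant; this is a secondary, fixable slip, but it matters for the bookkeeping.)

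To close the argument you need a mechanism for handling the remaining $2^N-1$ mixed leaves. One option is to iterate the pass on every leaf and take limits, tracking the weighted proportion of constant leaves exactly as the paper does in the proof of Theorem~\ref{thm:perc}. A cleaner route, which avoids any exponent accounting, is to exploit multilinearity of the decorated density: rescale so that $t_H(W_e)=1$ for every $e$ (degenerate cases $t_H(W_e)=0$ need a separate word), so the H\"older inequality becomes $t_H(\mathbf{W})\leq 1$; then let $M$ be the maximum of the decorated density over the finitely many decoration patterns $\mathbf{c}\colon E(H)\to E(H)$, apply the pass to a maximizing pattern, and note that one leaf is constant (value $1$) while every other leaf is at most $M$, giving $M^{2^N}\leq M^{2^N-1}$ and hence $M\leq 1$. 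Without some such device, the ``delicate aggregation'' you invoke is exactly the missing proof, so as written the argument is incomplete at its crucial step.
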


The goal of this section is to prove a multilayered variant of this result whose conclusion is that a particular graph $H$ is dominating.
Suppose that $H_1,H_2,\dots,H_k$ are edge-disjoint subgraphs of $H$ that partition the edge set $E(H)$. We say that $H_1,H_2,\dots,H_k$ are \emph{layers} of the graph $H$ if there is a nontrivial subset $\Phi$ of the set of cut involutions of $H$ such that $\phi(e) \in E(H_i)$ for all $\phi \in \Phi$, $i \in [k]$ and $e \in E(H_i)$. 
A \emph{layered percolating sequence} for $H$ (with respect to $H_1,H_2,\dots,H_k$, though we typically omit this) is then a folding sequence $J_0,J_1,\dots,J_N$ with signature $(\psi_i)_{i=1}^N$ such that $J_0$ consists of $k$ edges $e_1,e_2,\dots,e_k$, where $e_i\in E(H_i)$ for each $i\in [k]$,  $J_N=E(H)$ and each $\psi_i$ is a half-folding map associated to a cut involution $\phi_i \in \Phi$. The $k$ edges $e_1,e_2,\dots,e_k$ that form $J_0$ are called the \emph{seeds} of the layered percolating sequence. In particular, if there is a layered percolating sequence for $H$, then each $H_i$ is edge-transitive under the group of automorphisms of $H$
generated by the set of layer-preserving cut involutions $\Phi$.

\begin{example}\label{ex:percolate_6wheel}
On the left side of Figure~\ref{fig:base2}, the two thick edges represent one possible choice for~$J_0$ in a layered percolating sequence for $C_6^+$.
More precisely, the layers of the partition are the outer cycle~$C_6$ and the star~$K_{1,3}$ of edges adjacent to the central vertex. The inner edge percolates to cover the star~$K_{1,3}$, while the outer edge percolates to cover the cycle~$C_6$. Moreover, as required for a layered percolating sequence, they percolate together to cover the entire graph~$J$.
In contrast, the choice of thick edges on the right side of Figure~\ref{fig:base2} does \emph{not} produce a layered percolating sequence. To see this, note that any cut involution that places the two edges in opposite halves will delete one of the two edges when we reflect.
\end{example}

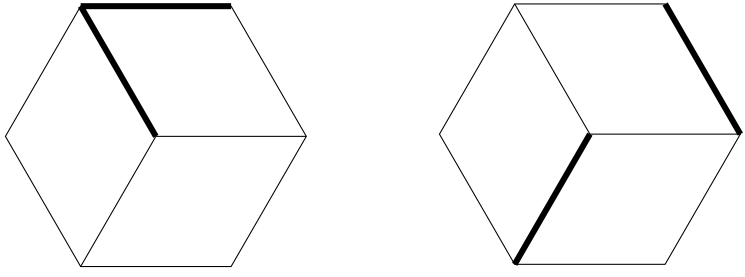
\begin{figure}
\centering
\begin{tikzpicture}
        \draw (0:\R) \foreach \x in {60,120,...,359} {
                -- (\x:\R)
            }-- cycle (90:\R) node[above] {} ;
        \draw (0:0)--(0:\R);
        \draw[line width = 0.8mm] (0:0)--(120:\R);     
        \draw (0:0)--(240:\R);   
        \draw[line width = 0.8mm] (120:\R)--(60:\R);      
\end{tikzpicture}
\hspace{15mm}
\begin{tikzpicture}
        \draw (0:\R) \foreach \x in {60,120,...,359} {
                -- (\x:\R)
            }-- cycle (90:\R) node[above] {} ;
        \draw (0:0)--(0:\R);
        \draw (0:0)--(120:\R);     
        \draw[line width = 0.8mm] (0:0)--(240:\R);   
        \draw[line width = 0.8mm] (0:\R)--(60:\R);
         
\end{tikzpicture}
\caption{Different choices for $J_0$ in $C_6^+$.} \label{fig:base2}
\end{figure}

For $I\subseteq [k]$, let $H_I = \cup_{i \in I} H_i$ be the subgraph obtained by taking the union of all the edges in the layers $H_i$ with $i\in I$. 
As the second case in Example~\ref{ex:percolate_6wheel} above shows, even if a layered percolating sequence exists, there may also be $k$-edge subsets $F$ spanning all the layers such that following the percolating sequence starting with $F$ does not lead to the whole set $E(H)$. This is a potential problem in generalising Theorem~\ref{thm:colouring_graphs}. However, we now show that, even if this happens, the folding sequence does always end with $E(H_I)$ for some $I\subseteq [k]$.

\begin{lemma}\label{lem:multiperc}
Let $H$ be a graph with layers $H_1, \dots, H_k$ and let $J_0, J_1,\dots,J_N$ be a layered percolating sequence of $H$ with  signature $(\psi_i)_{i=1}^N$. Then every folding sequence $F_0,F_1,\dots,F_N$ with the same signature satisfies $F_N=E(H_I)$ with $I= J_0\cap F_0$.
\end{lemma}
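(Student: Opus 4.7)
The plan is to recognize that every folding operation is literally a preimage: $J \mapsto J(\psi) = \psi^{-1}(J) = \{e : \psi(e) \in J\}$. A short induction on $n$ using $F_n = \psi_n^{-1}(F_{n-1})$ then gives the clean closed form
\[F_N = \Psi^{-1}(F_0), \qquad \Psi := \psi_1 \circ \psi_2 \circ \cdots \circ \psi_N,\]
for any folding sequence with signature $(\psi_i)_{i=1}^N$, where $\Psi$ acts on edges via its induced vertex action. This formula is the backbone of the argument.

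Applied first to the given layered percolating sequence, the identity $E(H) = J_N = \Psi^{-1}(J_0)$ says that $\Psi$ sends every edge of $H$ into $J_0$. Next I would verify that each $\psi_i$, and hence $\Psi$, preserves each layer $E(H_j)$. By hypothesis every $\phi \in \Phi$ does, and the associated half-folding $\phi^{\pm}$ acts on each edge either as the identity or as $\phi$ itself (with no mixed edges to worry about, thanks to property (iii) of the cut involution), so layer preservation transfers from $\phi$ to $\phi^{\pm}$. Combining this with $J_0 \cap E(H_j) = \{e_j\}$ forces $\Psi$ to be the constant map $e \mapsto e_j$ on each layer $E(H_j)$.

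Applying the preimage formula a second time, now to the arbitrary folding sequence with seeds $F_0 = \{f_1, \ldots, f_k\}$, yields
\[F_N = \Psi^{-1}(F_0) = \bigcup_{i=1}^k \Psi^{-1}(f_i).\]
The previous step gives $\Psi^{-1}(f_i) = E(H_i)$ when $f_i = e_i$ and $\Psi^{-1}(f_i) = \emptyset$ otherwise, so $F_N = \bigcup_{i \in I} E(H_i) = E(H_I)$ for $I = \{i : e_i = f_i\}$, which under the natural identification $i \leftrightarrow e_i$ is exactly $J_0 \cap F_0$. The only nontrivial verification in all of this is the layer-preservation check for half-foldings; once that is in hand, the argument is essentially a chain of definitional unfoldings, and I do not foresee any serious obstacle.
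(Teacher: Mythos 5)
Your argument is correct, and it reaches the conclusion by a genuinely different (and arguably cleaner) route than the paper. The paper never forms the composite map: it sets $F_{i,j}=F_i\cap E(H_j)$, uses the same layer-preservation fact you verify (stated there as ``restricting the signature to $E(H_j)$''), and then runs a two-case induction -- if $e_j\in F_0$ it uses monotonicity of $F\mapsto F(\psi)$ to get $F_{i,j}\supseteq J_i\cap E(H_j)$ and hence $F_{N,j}=E(H_j)$, and if $e_j\notin F_0$ it applies the same argument to the within-layer complements, using $\overbar{F}(\psi)=\overbar{F(\psi)}$, to get $F_{N,j}=\emptyset$. Your proof replaces both cases by the single structural statement that $F_N=\Psi^{-1}(F_0)$ with $\Psi=\psi_1\circ\cdots\circ\psi_N$ and that $\Psi$ is constant equal to $e_j$ on each layer $E(H_j)$ (which follows exactly as you say from $E(H)=J_N=\Psi^{-1}(J_0)$, layer preservation, and $J_0\cap E(H_j)=\{e_j\}$); this is a stronger intermediate fact from which both cases of the paper's induction drop out simultaneously, at the cost of the one verification you flag, namely that $\phi^{\pm}(e)\in\{e,\phi(e)\}$ for every edge $e$, which condition (iii) indeed guarantees. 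One small presentational slip: the lemma (and its later use in Theorem~\ref{thm:perc}) concerns an \emph{arbitrary} edge subset $F_0$, whereas your final paragraph specialises to $F_0=\{f_1,\dots,f_k\}$ with one edge per layer; this restriction is unnecessary, since your own formula gives $F_N=\Psi^{-1}(F_0)=\bigcup_{j\,:\,e_j\in F_0}E(H_j)=E(H_I)$ with $I=\{j: e_j\in F_0\}$, i.e.\ $I=J_0\cap F_0$ under the seed--index identification, for every $F_0\subseteq E(H)$, so no genuine gap results.
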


\begin{proof}
Let $F_{i,j}:=F_i\cap E(H_j)$.
Then $F_{i,j}=F_{i-1,j}(\psi_i)$, by considering the restriction of the signature $(\psi_i)_{i=1}^N$ to $E(H_j)$.
Suppose that there is $e_j\in J_0\cap F_0\cap E(H_j)$, i.e., $F_{0,j}\supseteq J_0\cap E(H_j)=\{e_j\}$. Then, by induction, 
\begin{align*}
    F_{i,j}=F_{i-1,j}(\psi_i)\supseteq \big(J_{i-1}\cap E(H_{j})\big)(\psi_i)=J_{i-1}(\psi_i)\cap E(H_j) = J_i \cap E(H_j)
\end{align*}
for each $i=1,2,\dots,N$. In particular,
$F_{N,j}=E(H_j)$.

Suppose now that $J_0\cap F_{0,j}$ is empty.
Let $\overbar{F}=E(H_j)\setminus F$ for $F\subseteq E(H_j)$.
It then follows that 
\begin{align*}
   \overbar{F}(\psi) = \{e\in E(H_j):\psi(e)\notin F\} = E(H_j)\setminus F(\psi) = \overbar{F(\psi)}.
\end{align*}
In particular,
$\overbar{F_{i,j}}=\overbar{F_{i-1,j}(\psi)}=\overbar{F_{i-1,j}}(\psi)$ and, therefore, $\overbar{F_{0,j}},\overbar{F_{1,j}},\dots,\overbar{F_{N,j}}$ is again a folding sequence with signature $(\psi_i)_{i=1}^N$.
Thus, by repeating the argument above, $\overbar{F_{N,j}}=E(H_j)$ since $J_0\cap \overbar{F_{0,j}}$ is nonempty. That is, $F_{N,j}$ is empty.

To summarise, $F_{N,j}$ is either the whole of $E(H_j)$ or empty, depending on whether $J_0\cap F_{0,j}$ is nonempty or empty. Thus, $F_N=E(H_I)$ with $I=J_0\cap F_0$.
\end{proof}

At first glance, this lemma seems to be something of a dead end. Starting from a given subgraph $F_0$, we set out to cover the entire graph, but instead only managed to cover some of its layers. It is at this point that we need to introduce our second operation. Suppose that we have a layered percolating sequence in $H$ whose set of seeds is $K$. Given a nonempty $I\subsetneq [k]$, a subgraph $H'$ of $H$ is said to be a \emph{relocation} of $H_I$ if it dominates $H_I$ and contains more than $|I|$ seeds from $K$. We then say that the \emph{layers can be relocated} if, for any nonempty $I \subsetneq [k]$, there is a relocation $H'$ of $H_I$.
In particular, if there is always a subgraph isomorphic to $H_I$ in $H$ that contains more than $|I|$ seeds, then the layers can be relocated. The rough idea is that after each relocation step we can apply the folding sequence to the relocation to fill more layers of $H$, which can in turn be relocated.

\begin{example} \label{ex:J}
As shown in Figure~\ref{fig:base3}, the layers of $C_6^+$, in this case a copy of $K_{1,3}$ and a copy of $C_6$, can both be relocated to include the set $J_0$ on the left side of Figure~\ref{fig:base2}.
\end{example}

\begin{figure}
\centering
\begin{tikzpicture}[level 2/.style={sibling distance=6.5mm}]

        \draw (0:\R) \foreach \x in {60,120,...,359} {
                -- (\x:\R)
            }-- cycle (90:\R) node[above] {} ;
        \draw (0:0)--(0:\R);
        \draw[line width = 0.8mm] (0:0)--(120:\R);     
        \draw (0:0)--(240:\R);   
        \draw[line width = 0.8mm] (120:\R)--(60:\R);      
        \draw[line width = 0.8mm] (120:\R)--(180:\R);
        
\end{tikzpicture}\hspace{15mm}
\begin{tikzpicture}[level/.style={sibling distance=30mm/#1}, level distance=10mm]    
        
        \draw (0:\R) \foreach \x in {60,120,...,359} {
                -- (\x:\R)
            }-- cycle (90:\R) node[above] {} ;
        \draw (0:0)--(0:\R);
        \draw[line width = 0.8mm] (0:0)--(120:\R);     
        \draw[line width = 0.8mm] (0:0)--(240:\R);   
        \draw[line width = 0.8mm] (120:\R)--(60:\R);      
        \draw[line width = 0.8mm] (0:\R)--(60:\R);  
        \draw[line width = 0.8mm] (300:\R)--(359:\R);  
        \draw[line width = 0.8mm] (300:\R)--(240:\R);  
\end{tikzpicture}
\caption{Relocations of the inner star $K_{1,3}$ and the outer cycle $C_6$ in $C_6^+$.} \label{fig:base3}
\end{figure}

The main result of this section is now the following.

\begin{theorem} \label{thm:perc}
Suppose that $H$ is a graph with layers $H_1, \dots, H_k$. If there exists a layered percolating sequence for $H$ and the layers can be relocated, then $H$ is dominating.
\end{theorem}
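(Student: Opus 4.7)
The plan is to prove, by downward induction on $|I|$, that $t_H(W)^{1/e(H)} \geq t_{H_I}(W)^{1/e(H_I)}$ for every nonempty $I\subseteq[k]$, and then extend the result to arbitrary subgraphs. The base case $I = [k]$ is immediate, since $H_{[k]} = H$.

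For the inductive step with $I\subsetneq[k]$, I would first use the relocation hypothesis to obtain a subgraph $H''\subseteq H$ that dominates $H_I$ and contains seeds indexed by some $I''$ with $|I''|>|I|$. Setting $F_0 := E(H'')$ and applying the signature $(\psi_i)_{i=1}^N$, an extension of Lemma~\ref{lem:multiperc} (whose proof, as written, proceeds layer-by-layer and works for any choice of $F_0$) shows that the resulting folding sequence ends at $F_N = E(H_{I''})$. Iterating the Cauchy--Schwarz bound~\eqref{eq:CS} along the sequence yields
\[
t_{H''}(W)^{2^N} \leq \prod_{\sigma\in\{+,-\}^N} t_{G_\sigma}(W),
\]
where each $G_\sigma$ is obtained from $H''$ via the $\sigma$-sequence of half-folds. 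The elementary identity $e(J^+(\phi)) + e(J^-(\phi)) = 2e(J)$ gives $\sum_\sigma e(G_\sigma) = 2^N e(H'')$. Provided each $G_\sigma$ admits the bound $t_{G_\sigma}(W) \leq t_H(W)^{e(G_\sigma)/e(H)}$ --- either via the inductive hypothesis (when $G_\sigma = E(H_{I_\sigma})$ for some $I_\sigma$ with $|I_\sigma|>|I|$) or via an explicit Sidorenko-type argument (when $G_\sigma$ splits into components that are simple enough) --- multiplying these bounds and taking $2^N$-th roots yields $t_{H''}(W)^{1/e(H'')}\leq t_H(W)^{1/e(H)}$. Chained with the relocation inequality $t_{H_I}^{1/e(H_I)} \leq t_{H''}^{1/e(H'')}$, this closes the induction.

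To extend from layer-unions $H_I$ to arbitrary subgraphs $H'\subseteq H$, I would exploit the edge-transitivity of each layer under the group generated by $\Phi$ (which is implicit in the existence of a layered percolating sequence) to move $H'$ by an automorphism, preserving $t_{H'}$, so that $H'$ contains a seed in each layer it meets, and then apply the same folding/Cauchy--Schwarz argument starting from $F_0 := E(H')$. The main obstacle will be the careful identification of the terms $G_\sigma$ in the Cauchy--Schwarz expansion: not every $G_\sigma$ need coincide with some $E(H_{I_\sigma})$, as already small examples (e.g.\ non-percolating signatures on $C_6$) show that one can produce genuine intermediate subgraphs of $H$. Controlling these spurious terms --- whether by a layer-wise decomposition showing that they split as disjoint unions whose contributions are absorbed into the exponent count, or by a strengthened simultaneous induction that handles all relevant subgraphs at once --- will be the technical heart of the argument.
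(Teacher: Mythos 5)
Your proposal assembles the right ingredients---the Cauchy--Schwarz tree driven by the percolating signature, Lemma~\ref{lem:multiperc} to identify terminal terms, relocation to gain seeds, and the edge-count invariant $\sum_\sigma e(G_\sigma)=2^Ne(F_0)$---but the downward induction on $|I|$ does not close, and the place where it fails is exactly the place you defer. In the expansion $t_{H''}(W)^{2^N}\leq\prod_\sigma t_{G_\sigma}(W)$, only the branch following the actual signature is guaranteed by Lemma~\ref{lem:multiperc} to end at $E(H_{I''})$ with $|I''|>|I|$. The other $2^N-1$ branches end (granting that they are layer unions at all, which needs Lemma~\ref{lem:multiperc} extended to the conjugated signatures occurring in the tree, e.g.\ via the identity $J(\overbar{\psi})=\phi(J)(\psi)$) at sets $E(H_{I_\sigma})$ with $I_\sigma$ arbitrary, typically including nonempty sets with $|I_\sigma|\leq|I|$. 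These terms carry positive exponent weight that cannot be discarded, since the invariant forces every edge to be accounted for, yet the inductive hypothesis says nothing about them (the induction runs in the wrong direction), and the proposed fallback of an ``explicit Sidorenko-type argument'' is circular: an upper bound $t_G(W)\leq t_H(W)^{e(G)/e(H)}$ for a general subgraph $G$ is precisely the statement being proved. So the inductive step, as written, proves nothing beyond the base case, and your closing paragraph concedes that this is still open.

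The paper's proof resolves this not by a finite induction but by an unbounded iteration with a limiting argument: every terminal layer union other than $\emptyset$ and $E(H)$ is relocated, the whole tree is re-applied to every resulting term, and after $k-1$ rounds at least one term equals $E(H)$; continuing to iterate, the total exponent weight carried by terms that are neither $\emptyset$ nor $E(H)$ contracts by a fixed factor strictly less than $1$ in each round, so in the limit the invariant $\sum_i\rho_i|K_i|=|F|$ leaves exactly the bound $t_F(W)\leq t_H(W)^{e(F)/e(H)}$. This iteration-plus-limit device, applied to \emph{all} terms simultaneously rather than to a single chain of layer unions, is what your proposal lacks; your suggested ``strengthened simultaneous induction'' would have to become precisely this argument. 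Two smaller points: the reduction of a general subgraph $F$ only needs (and only permits) arranging $F\cap J_0\neq\emptyset$ via edge-transitivity of one layer---you cannot in general place a seed in every layer that $F$ meets with a single automorphism, though, once domination of all $H_I$ is known, one seed suffices; and note that relocation is a domination inequality rather than a containment, so it too must be fed into the exponent bookkeeping, as the paper does by giving relocation vertices a single child in the tree.
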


\begin{proof}
The case $k=1$ is a corollary of Theorem~\ref{thm:colouring_graphs}, so we may assume $k>1$. Let $F$ be an arbitrary nonempty edge subset of $H$ and let $\Psi:=(\psi_i)_{i=1}^N$ be the signature of a layered percolating sequence $J_0, J_1,\dots,J_N$. Note that, since each layer is edge transitive, we may assume $F\cap J_0$ is nonempty.

We now define a rooted tree~$\mathcal{T}(F;\Psi)$ of depth $N+1$ which encodes which graphs are obtained through iterations of~\eqref{eq:CS} and relocations. The vertices of $\mathcal{T}(F;\Psi)$ are labelled by edge subsets of $H$ with the root labelled by the initial edge subset $F$. Each vertex at depth $d \le N$, labelled with~$J$, say, has two children
with labels $J(\psi_d)$ and $J(\overbar{\psi_d})$. If, at depth $N$, a vertex has label $J=E(H_I)$ for some nonempty $I\subsetneq [k]$, then it has a unique child labelled by its relocation $J'$. Otherwise, it has a unique child labelled by the same $J$ as the parent.
We now abuse notation slightly by using $J$ to denote both an edge subset of $H$ and the corresponding subgraph. 
With this convention, each parent $J$ relates to its children $J(\psi)$ and $J(\overbar{\psi})$ or its only child $J'$ through the inequalities
\begin{align*}
    t_{J}(W)\leq t_{J(\psi)}(W)^{1/2}t_{J(\overbar{\psi})}(W)^{1/2}~~\text{ or }~~ t_{J}(W)\leq t_{J'}(W)^{e(J)/e(J')}.
\end{align*}
In either case, the sum of the size of the edge set times the corresponding exponent is the same on  both sides, i.e., $|J|=\tfrac{1}{2}|J(\psi)|+\tfrac{1}{2}|J(\overbar{\psi})|$ and $|J|=|J'|\cdot |J|/|J'|$. This is therefore an invariant throughout the entire process.

Denote by $F_{1},\dots,F_{2^d}$ the edge subsets that label the vertices at depth $d\le N$. 
Because we applied inequality~\eqref{eq:CS} iteratively at each depth,
we have the bound
\begin{align*}
    t_{F}(W) \leq \prod_{i=1}^{2^d} t_{F_{i}}(W)^{1/2^d},
\end{align*}
where $\sum_{j=1}^{2^d}2^{-d}|F_{j}|=|F|$. 
When $d=N$, by Lemma~\ref{lem:multiperc}, each $F_{j}$ is equal to $E(H_{I_j})$ for some $I_j\subseteq [k]$ and, moreover, at least one of the $I_j$ is nonempty, since $J_0\cap F$ was nonempty. Then the single child of any $F_j$ with $I_j\neq\emptyset$ is the relocation $F_j'$ of $F_j$, which, unless $I_j=[k]$,  contains more than $|I_j|$ seeds. If $I_j$ is either empty or $[k]$, then no relocation is possible. Thus, at depth $N+1$, the corresponding inequality is
\begin{align}\label{eq:first_iteration}
    t_{F}(W) \leq \prod_{i=1}^{2^N} t_{F_{i}'}(W)^{\alpha_i},
\end{align}
where $F_i'$ is the label of the unique child of $F_i$, $\alpha_i = |F_i|/|F'_i|2^N$ and $\sum_{i=1}^{2^N} \alpha_i |F'_i| = |F|$.

We now expand this tree by adding the tree $\mathcal{T}(F_j';\Psi)$ starting from each leaf. 
Once again, one of the leaves is equal to a relocation of some $E(H_J)$, which, unless $J=[k]$, contains more than $|J|$ seeds. Thus, for each $j=1,2,\dots, 2^N$,
\begin{align*}
    t_{F_j'}(W) \leq \prod_{i=1}^{2^N} t_{F_{i,j}^{(2)}}(W)^{\alpha_{i,j}},
\end{align*}
where $\sum_{i=1}^{2^N} \alpha_{i,j} |F_{i,j}^{(2)}| = |F'_j|$. Hence, combined with~\eqref{eq:first_iteration}, we have that 
\begin{align*}
    t_{F}(W) \leq \prod_{i=1}^{2^{2N}} t_{F_{i}^{(2)}}(W)^{\beta_i},
\end{align*}
where $F_{i}^{(2)}$ is a reindexing of the $F_{i,j}^{(2)}$'s and $\sum_{i=1}^{2^{2N}} \beta_i |F_{i}^{(2)}| = |F|$. We continue to iterate this process, noting that, each time we iterate, we increase the maximum number of seeds in the graphs that appear in the corresponding upper bound for $F$, until it reaches $k$. 
Therefore, if we iterate $k-3$ more times, we must obtain a bound of the form 
\begin{align}\label{eq:firstbound}
    t_{F}(W) \leq \prod_{i=1}^{N_k} t_{K_{i}}(W)^{\rho_i},
\end{align}
where $N_k=2^{(k-1)N}$, $\sum_{i=1}^{N_k} \rho_i|K_{i}|=|F|$ and at least one of the $K_j$, say $K_1$, is equal to $E(H)$. Since the choice of $F$ was arbitrary, each $K_j$ also satisfies
\begin{align*}
    t_{K_j}(W) \leq \prod_{i=1}^{N_k} t_{K_{i}^{(j)}}(W)^{\rho_{i,j}}
\end{align*} 
for some $\rho_{i,j}$ with $\sum_{i=1}^{N_k} \rho_{i,j} |K_{i}^{(j)}| = |K_j|$. 
Note that if $K_j$ equals either $\emptyset$ or $E(H)$, then so do all of its descendants $K_{i}^{(j)}$, while if $K_j$ is neither $\emptyset$ nor $E(H)$, then at least one of its descendants, say $K_{1}^{(j)}$, is equal to $E(H)$.
Substituting this back into~\eqref{eq:firstbound} yields, after a suitable relabelling, that
\begin{align*}
    t_{F}(W) \leq \prod_{i=1}^{N_k^2} t_{K_{i}}(W)^{\sigma_i},
\end{align*}
where $\sum_{i=1}^{N_k^2} \sigma_i |K_i| = |F|$ and the proportion of the $K_j$ which are equal to neither $\emptyset$ nor $E(H)$ has dropped to at most a $(1 - 1/N_k)$-factor of what it was. Repeating this process, we see that the proportion of $K_j$ which are equal to neither $\emptyset$ nor $E(H)$ converges to $0$ and the proportion which are equal to $E(H)$ converges to some limit $\gamma$. But then we must have that $\gamma e(H) = e(F)$ and $t_F(W) \leq t_H(W)^\gamma$, which together yield the required domination inequality.
\end{proof}

When applying Theorem~\ref{thm:perc}, there are two conditions to check: the existence of layered percolating sequences and the possibility of relocations. While verifying the second condition can be a little ad hoc, there is an approach to finding graphs with layered percolating sequences that builds on our earlier work~\cite{CL17} connecting weakly norming graphs and reflection groups. We will be rather terse here, but we refer the reader to~\cite{CL17} for a more comprehensive introduction to reflection groups.

Let $\W$ be a finite reflection group with $S$ a set of simple reflections and let $\W_1,\W_2,\dots,\W_k$ be subgroups of $\W$ generated by subsets $S_1,S_2,\dots,S_k$ of $S$, respectively. Then the $(S_1,\dots,S_k;S,\W)$-reflection hypergraph, shown in~\cite{CL17} to be weakly norming, is the $k$-partite $k$-uniform hypergraph whose parts are the cosets of $\W_i$ for each $i = 1, \dots, k$, with an edge for every $k$-tuple of the form $(w\W_1,w\W_2,\dots,w\W_k)$ with $w \in \W$. If we now replace each hyperedge $(w\W_1,w\W_2,\dots,w\W_k)$ by a $(k-1)$-star centred at $w\W_i$, we obtain a bipartite graph\footnote{If multiple edges occur, then we simplify them.}~between the cosets of $\W_i$ and the union of the cosets of $\W_1,\dots, \W_{i-1}, \W_{i+1}, \dots,\W_k$, which we call the \emph{$(S_1,\dots,S_k;i,S,\W)$-graph}. 
Since the induced subgraph of this graph between the cosets of $\W_i$ and $\W_j$, for each $j\neq i$, is the $(S_i,S_j;S,\W)$-reflection graph and the set of reflections $t\in\W$ give cut involutions of these graphs
(see~\cite[Corollary 4.9]{CL17}),  
the graph naturally has layers.

\begin{example}
Let $\W$ be the symmetric group on the three elements $\{1,2,3\}$ and let $S=\{\sigma_{12},\sigma_{23}\}$, where $\sigma_{ij}$ is the permutation that swaps $i$ and $j$. Then the graph $C_6^+$ in Figure~\ref{fig:base} is isomorphic to the $(S_1,S_2,S_3;1,S,\W)$-graph with  $S_1=\{\sigma_{12}\}$, $S_2=\{\sigma_{23}\}$ and $S_3=S$.
\end{example}

\begin{theorem}\label{thm:multi_perc}
Let $H$ be the $(S_1,S_2,\dots,S_k;i,S,\W)$-graph. Then there exists a layered percolating sequence $J_0,J_1,\dots, J_N$ such that $J_0$ is the edge set of the $(k-1)$-star induced on $\{\W_1,\dots,\W_k\}$.
\end{theorem}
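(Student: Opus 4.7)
The plan is to lift the single-layer percolating construction of~\cite{CL17} to the full graph $H$, using the fact that the cut involutions in~\cite{CL17} come from reflections in the shared reflection group $\W$ and therefore act uniformly across all layers of $H$. Each layer $H_j$ (for $j\in[k]\setminus\{i\}$) is, by the construction in the paragraphs preceding the theorem, the $(S_i,S_j;S,\W)$-reflection graph, so the proof of Theorem~\ref{thm:colouring_graphs} in~\cite{CL17} supplies a percolating sequence $\Psi^{(j)}$ for $H_j$ starting at the single edge $e_j:=(\W_i,\W_j)$, each of whose half-folding maps arises from a cut involution $\phi_t$ of $H_j$ associated to some reflection $t\in\W$. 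By the discussion above and~\cite[Corollary 4.9]{CL17}, every such $\phi_t$ extends to a layer-preserving cut involution of the full graph $H$, so each $\Psi^{(j)}$ immediately lifts to a sequence of half-folding maps on $H$. The candidate seed $J_0=\{e_j:j\neq i\}$, which is exactly the $(k-1)$-star at $\W_i$, has precisely one edge in each layer, as required by the definition of a layered percolating sequence.

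My primary approach is to exhibit a single universal signature. Since~\cite{CL17} builds its percolating signatures from the intrinsic combinatorics of $\W$ (essentially from an ordering of the reflections of $\W$ coming from a reduced expression of the longest element), I expect that taking $\Psi:=\Psi^{(j_0)}$ for any fixed $j_0\neq i$ and viewing it as a sequence of half-folding maps on $H$ yields a sequence whose restriction to every layer $H_j$ is again a percolating sequence in $H_j$ starting from $e_j$. Assuming this holds, applying $\Psi$ to $J_0$ fills every layer simultaneously and gives $J_N=\bigcup_{j\neq i}E(H_j)=E(H)$, as desired.

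The main obstacle is verifying this cross-layer uniformity of the~\cite{CL17} signature. If that verification is not straightforward, the backup is concatenation: order the layers as $j_1,\dots,j_{k-1}$ and set $\Psi:=\Psi^{(j_1)}\cdots\Psi^{(j_{k-1})}$, then prove by induction on $l$ that after the first $l$ blocks the layers $H_{j_1},\dots,H_{j_l}$ are fully covered. Two ingredients drive the induction: once a layer is fully covered, any layer-preserving half-folding map (being an edge permutation of the layer) preserves that fullness, and by Lemma~\ref{lem:multiperc} any folding sequence in a single layer whose starting state meets a percolating seed ends in the full edge set of that layer. The delicate step is to guarantee that at the start of block $l+1$ the state of $H_{j_{l+1}}$ still contains an edge able to play the role of the seed of $\Psi^{(j_{l+1})}$; the edge-transitivity of $H_{j_{l+1}}$ under the group generated by the layer-preserving cut involutions $\phi_t$ lets us rechoose $\Psi^{(j_{l+1})}$ to start from any edge of $H_{j_{l+1}}$ surviving in the current state, closing the induction and producing the desired layered percolating sequence.
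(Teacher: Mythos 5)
Your primary plan is, in spirit, exactly the route the paper takes: the paper omits the proof of Theorem~\ref{thm:multi_perc} precisely because it is meant to be the percolation argument of~\cite[Theorem~1.2]{CL17} run verbatim, using the fact that the folding maps come from reflections $t\in\W$ acting simultaneously (and layer-preservingly) on all of $H$. But the one thing you defer -- the ``cross-layer uniformity'' -- is the entire mathematical content of the theorem, so as written the proposal has a gap at its centre. The claim that needs proving is not ``the signature $\Psi^{(j_0)}$ percolates layer $H_{j_0}$'' but rather that the CL17 construction, started from the aligned seeds (all $k-1$ seed edges of the star at $\W_i$ correspond to the identity element $w=\mathrm{id}$), covers, for \emph{every} $w\in\W$, the edge $(w\W_i,w\W_j)$ in \emph{every} layer $j\neq i$. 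This is what the induction on word length in~\cite{CL17} actually yields, because the bookkeeping there is done at the level of group elements/chambers and is independent of which pair $(S_i,S_j)$ one projects to; stating and checking that is the verification you postpone, and without it the theorem is only conjectured, not proved.

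Your backup (concatenating the signatures $\Psi^{(j_1)},\dots,\Psi^{(j_{k-1})}$) does not close this gap, because the induction as you set it up can fail: while a layer that is already full stays full under layer-preserving foldings, nothing prevents the block $\Psi^{(j_1)}$ from annihilating the current state of a \emph{later} layer. Since $\phi$ preserves layers, $J(\psi)\cap E(H_j)=\{e\in E(H_j):\psi(e)\in J\cap E(H_j)\}$ depends only on $J\cap E(H_j)$; hence if some block empties layer $H_{j_{l+1}}$ (exactly the deletion phenomenon illustrated on the right of Figure~\ref{fig:base2} in Example~\ref{ex:percolate_6wheel}), that layer remains empty forever and no re-seeding via edge-transitivity is available, because there is no surviving edge to re-seed from. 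Lemma~\ref{lem:multiperc} cannot be invoked to rule this out, since it presupposes the existence of the layered percolating sequence you are trying to build. Showing that the seed of a later layer (or some edge of it) survives all earlier blocks is essentially the same alignment statement as the uniformity claim in your primary approach, so the backup offers no escape: either way you must carry out the CL17 induction across all layers simultaneously, which is the step the paper labels ``virtually identical'' to~\cite{CL17} and which your write-up leaves unproved.
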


We omit the proof, which is virtually identical to that of~\cite[Theorem~1.2]{CL17}.

\section{Constructions of dominating graphs} \label{sec:examples}

We now construct various examples of dominating graphs. 
Our first such family is as follows and already includes our running example $J = C_6^+$ from Figure~\ref{fig:base}.

\begin{theorem} \label{thm:star}
Suppose that $H$ is a reflection graph with one side $A$ in its bipartition of order $a$ and regular of degree $d$. If $d\geq a-1$, then the graph $H_A^+$ formed by joining a new vertex to each vertex in $A$ is dominating.
\end{theorem}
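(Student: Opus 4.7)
The plan is to apply Theorem~\ref{thm:perc} to $H_A^+$ with the two layers $H_1 := H$ and $H_2 := K_{1,a}$, where $K_{1,a}$ is the star formed by the $a$ new edges between the added vertex $v$ and $A$. I therefore need to produce a layered percolating sequence for $H_A^+$ and verify that each layer can be relocated. I will take as seeds $e_1 := (a_1, b_0) \in E(H)$ and $e_2 := (v, a_1) \in E(K_{1,a})$ sharing the endpoint $a_1 \in A$, where $b_0 \in N_H(a_1)$ is some chosen neighbour. Each cut involution of $H$ extends to a cut involution of $H_A^+$ by fixing $v$, and such extensions preserve both layers because they act on $E(K_{1,a})$ only through the induced action on $A$.

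For the layered percolating sequence, I will argue that the sequence of half-folding maps provided by Theorem~\ref{thm:colouring_graphs} for $H$ from the single seed $e_1$ continues to work when applied verbatim in $H_A^+$ from the pair $\{e_1, e_2\}$. The $H$-part inherits the original percolation, while the star part covers $E(K_{1,a})$ because the orbit of $a_1$ under the relevant folding operations sweeps out all of $A$, which in turn follows from the edge-transitivity of reflection graphs under the cut-involution group. Establishing this simultaneous percolation rigorously is the main technical obstacle I expect.

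For the relocation of the star layer, let $H'_2$ be the subgraph consisting of all $d+1$ edges incident to $a_1$ in $H_A^+$, which forms a copy of $K_{1,d+1}$; it clearly contains both seeds, and since $t_{K_{1,n}}(W)^{1/n}$ equals the $L^n$-norm of the degree function $x \mapsto \int W(x,y)\,dy$ on the probability space $[0,1]$, the monotonicity of $L^p$-norms together with $d+1 \geq a$ yields $t_{K_{1,d+1}}(W)^{1/(d+1)} \geq t_{K_{1,a}}(W)^{1/a}$, so $H'_2$ dominates $H_2$. For the relocation of the $H$-layer, provided $d \geq 2$, I will pick $b^* \in N_H(a_1) \setminus \{b_0\}$ (nonempty since $|N_H(a_1)| = d \geq 2$) and let $H'_1$ be the subgraph of $H_A^+$ obtained from $H$ by deleting $b^*$ and its incident edges and reattaching them at $v$, with vertex set $(V(H) \setminus \{b^*\}) \cup \{v\}$ and edge set $(E(H) \setminus \{(b^*, a) : a \in N_H(b^*)\}) \cup \{(v, a) : a \in N_H(b^*)\}$. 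The inserted edges lie in $E(H_A^+)$ because $N_H(b^*) \subseteq A$ and $v$ is adjacent to every vertex of $A$; moreover, $H'_1$ is isomorphic to $H$ via the transposition $b^* \leftrightarrow v$, so it dominates $H$ with equality, and the choices $b^* \neq b_0$ and $b^* \in N_H(a_1)$ guarantee that both seeds lie in $H'_1$. The borderline case $d = 1$ forces $a \leq 2$, whence $H_A^+$ is either $P_3$ (dominating by Godsil's theorem) or $K_{2,2}$ (weakly norming and hence dominating), and is handled directly.
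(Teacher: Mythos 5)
Your relocation arguments are sound and essentially coincide with the paper's: the $H$-layer is relocated by trading a vertex $b^*\in N_H(a_1)\setminus\{b_0\}$ for the apex (the paper phrases this as replacing some $b\in B$ by $v_0$, with the same $d\ge 2$ proviso), and for the star layer the paper simply takes an $a$-star centred at a vertex of $A$, using $d+1\ge a$, rather than your full $(d+1)$-star plus $L^p$-monotonicity --- a cosmetic difference. (In your $d=1$ discussion you omit the possibility that $H$ is a two-edge matching, in which case $H_A^+=P_5$; this is also dominating, so nothing breaks.)

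The genuine gap is exactly the step you flag and leave open: the existence of a layered percolating sequence for $H_A^+$ with seeds $e_1,e_2$ is one of the two hypotheses of Theorem~\ref{thm:perc}, and it is not established by your appeal to ``the orbit of $a_1$ sweeping out $A$ by edge-transitivity''. Edge-transitivity under the cut-involution group says that \emph{some} product of involutions carries one edge to another; what is needed is that the \emph{fixed signature} $(\psi_i)$ of $H$'s percolating sequence, applied in $H_A^+$, simultaneously fills the star layer, and this does not follow from transitivity alone. The paper sidesteps the issue by a structural identification: taking $S_3=S$ (so $\W_3=\W$), the graph $H_A^+$ is precisely the $(S_1,S_2,S_3;1,S,\W)$-graph, and Theorem~\ref{thm:multi_perc} then supplies a layered percolating sequence whose seeds form a $2$-star centred in $A$ (and any such $2$-star works, by edge-transitivity of each layer). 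If you want to keep your direct approach, the missing step can in fact be closed by an induction along the folding sequence: the extended involutions fix $v$ and preserve the sides $A$ and $B$ (this side-preservation, which the reflections $t\in\W$ provide, is also needed for your extension to be an automorphism of $H_A^+$), and one shows that at every stage each $a\in A$ incident to an $H$-edge of the current set also has $(v,a)$ in the set: if $(a,b)\in J_i$ then $(\psi_i(a),\psi_i(b))\in J_{i-1}$ with $\psi_i(a)\in A$, so by induction $(v,\psi_i(a))\in J_{i-1}$ and hence $(v,a)\in J_i$; since the $H$-part ends at $E(H)$, the star part ends at $E(K_{1,a})$. Without either this argument or the identification via Theorem~\ref{thm:multi_perc}, the proof is incomplete.
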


\begin{proof}
Suppose that $H$ is the $(S_1,S_2;S,\W)$-reflection graph, where $A$ is the family of all cosets of the form $w\W_1$. Let $S_3=S$, so that $\W_3=\W$. Then $H_A^+$ is the $(S_1,S_2,S_3;1,S,\W)$-graph and, hence, by Theorem~\ref{thm:multi_perc}, there exists a layered percolating sequence starting with two edges that form a star centred at $v\in A$. In fact, \emph{any} such  $2$-star does the job, because of the edge-transitivity of each layer. 

By Theorem~\ref{thm:perc}, it remains to show that the layers can be relocated. Let $v_0$ be the vertex adjacent to all the vertices in $A$ and let $B$ be the other side of the bipartition of $H$ to $A$.
First observe that the layer isomorphic to the $(S_1,S_2;S,\W)$-graph $H$ can be relocated by using the vertex $v_0$. Concretely, we may replace any $b\in B$ and its incident edges by $v_0$ and the edges between $v_0$ and the neighbours of $b$, since $v_0$ is adjacent to all vertices in $A$. Provided $d \geq 2$ (the easy case where $d = 1$ can 
 be dealt with separately), this relocation contains two seeds that form a $2$-star centred on $A$, whereas $H$ only contained one seed.
The other layer between $v_0$ and $A$ is just a copy of an $a$-star centred at $v_0$. This star can easily be relocated to another $a$-star centred in $A$ containing more seeds than the star centred at~$v_0$, where we use that each $v\in A$ has degree $d+1\geq a$ in~$H_A^+$.
\end{proof}

This theorem adds many more examples of dominating graphs besides $C_6^+$. For instance, Figure~\ref{fig:H+} shows the dominating graphs obtained by adding a vertex to four vertex-disjoint copies of $K_{1,4}$, the 1-subdivision of $K_4$ and the 3-dimensional cube. 

\R = 1.2cm

\newdimen\S
\S=2.4cm

\newdimen\W
\W = 1.4cm

\newdimen\T
\T = 1.6cm

\newdimen\U
\U = 1cm

\begin{figure}
\centering

		\begin{tikzpicture}[level distance=1.5cm,
  level 1/.style={sibling distance=1.2cm},
  level 2/.style={sibling distance=0.3cm},
  common/.style={circle,fill,inner sep =0pt, minimum size=4pt}]
  \node [common]{}
    child {node [common]{}
      child{ node [common]{} }
      child {node [common]{} }
      child {node [common]{} }
      child {node [common]{} }
    }
    child {node [common]{}
      child{ node [common]{} }
      child {node [common]{} }
      child {node [common]{} }
      child {node [common]{} }
    }
    child {node [common]{}
      child{ node [common]{} }
      child {node [common]{} }
      child {node [common]{} }
      child {node [common]{} }
    }
    child {node [common]{}
      child{ node [common]{} }
      child {node [common]{} }
      child {node [common]{} }
      child {node [common]{} }
    };
		\end{tikzpicture}\hspace{15mm}
\begin{tikzpicture}[level 2/.style={sibling distance=6.5mm}]

        \draw (45:0)--(45:\W);
        \draw (135:0)--(135:\W);     
        \draw (225:0)--(225:\W);          
        \draw (315:0)--(315:\W);
        
        \draw (45:\W)--(0:\T);
        \draw (315:\W)--(0:\T); 
            
        \draw (45:\W)--(90:\T);          
        \draw (135:\W)--(90:\T);
        
        \draw (135:\W)--(180:\T);
        \draw (225:\W)--(180:\T); 
            
        \draw (225:\W)--(270:\T);          
        \draw (315:\W)--(270:\T);

        \draw (0:\U)--(45:\W);
        \draw (0:\U)--(225:\W);
        
        \draw (180:\U)--(135:\W);
        \draw (180:\U)--(315:\W);           
        
        \node at (45:\W) [circle,fill,inner sep =0pt, minimum size=4pt] () {};
        \node at (0:\T) [circle,fill,inner sep =0pt, minimum size=4pt] () {};
        
        \node at (135:\W) [circle,fill,inner sep =0pt, minimum size=4pt] () {};
        \node at (90:\T) [circle,fill,inner sep =0pt, minimum size=4pt] () {};
        
        \node at (225:\W) [circle,fill,inner sep =0pt, minimum size=4pt] () {};
        \node at (180:\T) [circle,fill,inner sep =0pt, minimum size=4pt] () {};
        
        \node at (315:\W) [circle,fill,inner sep =0pt, minimum size=4pt] () {};
        \node at (270:\T) [circle,fill,inner sep =0pt, minimum size=4pt] () {};

        \node at (0:\U) [circle,fill,inner sep =0pt, minimum size=4pt] () {};
        \node at (180:\U) [circle,fill,inner sep =0pt, minimum size=4pt] () {};
       
        \node at (0:0) [circle,fill,inner sep =0pt, minimum size=4pt] () {};        
        
        \node at (315:\S) () {};

\end{tikzpicture}\hspace{15mm}
\begin{tikzpicture}[level/.style={sibling distance=30mm/#1}, level distance=10mm]    

        \draw (45:0)--(45:\R);
        \draw (135:0)--(135:\R);     
        \draw (225:0)--(225:\R);          
        \draw (315:0)--(315:\R);
        
        \draw (45:\R)--(45:\S);
        \draw (45:\R)--(135:\S);
        \draw (45:\R)--(315:\S);
        
        \draw (135:\R)--(45:\S);
        \draw (135:\R)--(135:\S);
        \draw (135:\R)--(225:\S);
        
        \draw (225:\R)--(225:\S);
        \draw (225:\R)--(135:\S);
        \draw (225:\R)--(315:\S);
        
       \draw (315:\R)--(45:\S);
        \draw (315:\R)--(225:\S);
        \draw (315:\R)--(315:\S);
        
        \node at (45:\R) [circle,fill,inner sep =0pt, minimum size=4pt] () {};
        \node at (45:\S) [circle,fill,inner sep =0pt, minimum size=4pt] () {};
        
        \node at (135:\R) [circle,fill,inner sep =0pt, minimum size=4pt] () {};
        \node at (135:\S) [circle,fill,inner sep =0pt, minimum size=4pt] () {};
        
        \node at (225:\R) [circle,fill,inner sep =0pt, minimum size=4pt] () {};
        \node at (225:\S) [circle,fill,inner sep =0pt, minimum size=4pt] () {};
        
        \node at (315:\R) [circle,fill,inner sep =0pt, minimum size=4pt] () {};
        \node at (315:\S) [circle,fill,inner sep =0pt, minimum size=4pt] () {};
        
        \node at (0:0) [circle,fill,inner sep =0pt, minimum size=4pt] () {};
\end{tikzpicture}
\caption{Examples of $H_A^+$.} \label{fig:H+}
\end{figure}

\medskip

For a graph $H$, the \emph{$K_{2,t}$-replacement} of $H$ is the graph obtained by replacing each edge with a copy of $K_{2,t}$, identifying the edge's endpoints with the two vertices on one side of the
corresponding copy of $K_{2,t}$. In particular, the $K_{2,1}$-replacement of $H$ is just the $1$-subdivision of $H$. Alternatively, the $K_{2,t}$-replacement can be viewed as replacing each edge of $H$ by $t$ multiedges and then $1$-subdividing this multigraph. When we form the $K_{2,t}$-replacement of a graph, the side of the bipartition that corresponds to the original set of vertices is typically smaller. Thus, by Proposition~\ref{prop:regular}, for this graph to be dominating, the original graph $H$ must be regular. We now prove a sort of converse to this observation, showing that if $H$ is a regular reflection graph, then the $K_{2,t}$-replacement of $H$ is dominating.

\begin{theorem}\label{thm:K2t-replace}
The $K_{2,t}$-replacement of a connected regular reflection graph $H$ is dominating.
\end{theorem}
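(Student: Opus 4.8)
The plan is to verify the hypotheses of Theorem~\ref{thm:perc} for the graph $G$ obtained as the $K_{2,t}$-replacement of $H$. Write $A\cup B$ for the bipartition of $H$, with $|A|=|B|=m$ and common degree $d$, and for each edge $ab$ of $H$ with $a\in A$, $b\in B$ let $z_{ab,1},\dots,z_{ab,t}$ be the corresponding subdivision vertices of $G$, each adjacent to $a$ and to $b$; then $G$ is bipartite between $V(H)=A\cup B$ and the set $X$ of subdivision vertices. I take the layers to be $G_1$, the set of $A$--$X$ edges, and $G_2$, the set of $B$--$X$ edges. Since each subdivision vertex has exactly one neighbour in $A$ and distinct vertices of $A$ share no neighbour, $G_1$ is a disjoint union of $m$ stars $K_{1,td}$, and likewise for $G_2$; in particular $t_{G_i}(W)^{1/e(G_i)}=\bigl(\int d_W^{td}\bigr)^{1/td}$, where $d_W(x)=\int W(x,y)\,dy$. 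To see these really are layers I would take $\Phi$ to be generated by the natural lift $\widehat\phi$ of each cut involution $\phi$ of $H$ — acting on subdivision vertices by $z_{ab,j}\mapsto z_{\phi(a)\phi(b),j}$ — together with, for $j\ne j'$, the involution $\tau_{jj'}$ that swaps $z_{ab,j}\leftrightarrow z_{ab,j'}$ for every edge $ab$ of $H$ at once. Checking that these are cut involutions of $G$ is routine: for $\widehat\phi$ one takes $L_{\widehat\phi}=L_\phi\cup\{z_{ab,k}:\{a,b\}\cap L_\phi\ne\emptyset\}$ and similarly $R_{\widehat\phi}$, and notes that the new fixed set $F_\phi\cup\{z_{ab,k}:a,b\in F_\phi\}$ separates them, using that no edge of $H$ is swapped by $\phi$; for $\tau_{jj'}$ the fixed set is everything except the $z_{ab,j}$'s and $z_{ab,j'}$'s, which form two independent halves. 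Both kinds of involution clearly preserve $G_1$ and $G_2$.

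Next I would build a layered percolating sequence with seeds $e_1=uz$ and $e_2=vz$, where $uv$ is a fixed edge of $H$ with $u\in A$, $v\in B$, and $z=z_{uv,1}$; these form a $2$-path, one edge in each layer. In the first phase I take a percolating sequence for $H$ — which exists because $H$ is a reflection graph \cite{CL17} — and lift its signature to $G$. The key point is that lifting commutes with folding inside a fixed copy: if $\mathrm{sd}_1(K)$ denotes the subdivision inside copy $1$ of an edge set $K\subseteq E(H)$, then, because $\widehat\psi$ sends $z_{ab,j}$ to $z_{\psi(a)\psi(b),j}$, one has $\mathrm{sd}_1(K)(\widehat\psi)=\mathrm{sd}_1\bigl(K(\psi)\bigr)$ for every half-folding map $\psi$. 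Hence the lifted sequence carries $\{uz,vz\}=\mathrm{sd}_1(\{uv\})$ to $\mathrm{sd}_1(E(H))$, i.e. to the entire first copy of the subdivided $H$. In the second phase I append the half-folding maps of $\tau_{1j}$ for $j=2,\dots,t$, with $L$ the set of copy-$1$ subdivision vertices and $R$ that of copy $j$: applying these in turn replaces ``copies $1,\dots,j-1$ filled'' by ``copies $1,\dots,j$ filled'', so after the last one we have reached $E(G_1)\cup E(G_2)=E(G)$, as required.

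Finally I would check that the layers can be relocated, i.e. that for $I=\{1\}$ and for $I=\{2\}$ there is a subgraph of $G$ that dominates $G_I$ and contains more than one seed. Consider $I=\{1\}$ (the case $I=\{2\}$ being identical), and let $D$ be the union of the stars of $G$ centred at $u$ and at $v$; these share exactly the $t$ leaves $z_{uv,1},\dots,z_{uv,t}$, so $D$ has $2td$ edges and contains both $uz$ and $vz$, hence both seeds. Writing $\phi_x(y)=d_W(x)^{d-1}W(x,y)$ and integrating out the ``private'' leaves of $u$ and of $v$ gives
\[
 t_D(W)=\int_{[0,1]^t}\!F(y_1,\dots,y_t)^2\,dy_1\cdots dy_t,\qquad
 F(y_1,\dots,y_t)=\int_{[0,1]} d_W(x)^{t(d-1)}\prod_{i=1}^t W(x,y_i)\,dx,
\]
so that $t_D(W)=\|F\|_{L^2([0,1]^t)}^2\ge\|F\|_{L^1([0,1]^t)}^2=\bigl(\int d_W^{td}\bigr)^2=t_{G_1}(W)^{2/m}$ and therefore $t_D(W)^{1/e(D)}=t_D(W)^{1/2td}\ge\bigl(\int d_W^{td}\bigr)^{1/td}=t_{G_1}(W)^{1/e(G_1)}$. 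Thus $D$ dominates $G_1$, so the layers can be relocated, and Theorem~\ref{thm:perc} gives the result. Note that regularity of $H$ is essential here: if the stars at $u$ and $v$ had different sizes the left-hand side would instead be an inner product $\langle F_1,F_2\rangle$ of two distinct nonnegative functions, which need not dominate $\|F_1\|_{L^1}\|F_2\|_{L^1}$.

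I expect the main obstacle to be the combinatorial bookkeeping of the first two paragraphs — confirming that $\widehat\phi$ and $\tau_{jj'}$ are genuine cut involutions of $G$ (pinning down $L$, $R$, the new fixed set, and the absence of edges across the cut) and that the lifted folding sequence fills one subdivided copy exactly. These steps are conceptually straightforward but fiddly; by contrast, the relocation step, which is the genuinely new ingredient, reduces to the one-line estimate $\|F\|_{L^2}\ge\|F\|_{L^1}$ once $D$ is chosen correctly. As a sanity check, Theorem~\ref{thm:examp} is the special case in which $t=1$ and $H$ is a complete bipartite graph.
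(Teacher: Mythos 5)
Your proposal is correct and follows the paper's overall strategy: the same two layers (the $A$--$X$ and $B$--$X$ edges), a layered percolating sequence obtained from lifted cut involutions of $H$ together with involutions permuting the $t$ parallel copies of each subdivided edge, a relocation step, and then Theorem~\ref{thm:perc}. The differences are worth noting, though. For the percolating sequence you run the lifted $H$-percolation inside one copy first and then use the global swaps $\tau_{1j}$ to fill the remaining copies, whereas the paper first duplicates the seed pair across all $t$ copies using the local swaps $\phi_{b,b'}$ and then runs the lifted $H$-percolation on all copies simultaneously; these are interchangeable. The genuinely different ingredient is the relocation. The paper relocates the whole layer by replacing a vertex $a\in A$ with a vertex $b\in B$, obtaining an edge-disjoint union of $d+1$ overlapping $td$-stars (plus disjoint stars), and appeals to Theorem~2.7 of~\cite{L19} to show this dominates the star. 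You instead take the much smaller subgraph $D$ consisting of the two $td$-stars centred at the endpoints of the seed edge (which automatically contains both seeds) and prove $t_D(W)^{1/e(D)}\geq t_{G_1}(W)^{1/e(G_1)}$ by the one-line estimate $\norm{F}_{L^2}\geq\norm{F}_{L^1}$; this is more elementary and self-contained, and it is perfectly admissible since a relocation need only dominate the layer and contain more than $|I|$ seeds, with no constraint on its size. Two harmless slips: the auxiliary function $\phi_x(y)=d_W(x)^{d-1}W(x,y)$ has the wrong exponent (it should be $t(d-1)$, as in your definition of $F$, which is the one you actually use), and for layer-preservation of the lifted involutions you implicitly use that every cut involution of the connected bipartite graph $H$ preserves the two sides, which follows immediately from the fixed set being nonempty.
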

\begin{proof}
Suppose that $A\cup B$ is the bipartition of $H$.
Let $H'$ be the $K_{2,t}$-replacement of $H$ and let $A'\cup B'$ be its bipartition, where $A'=V(H)$ and $B'$ is the disjoint union of $t$ copies of $E(H)$. An edge in $H'$ then represents an incidence between the corresponding vertex and edge in $H$.

For a vertex $b\in B'$, let $E_b$ be the set of vertices in $B'$ that have the same two neighbours as $b$, i.e., the set of all copies of the same edge in $H$ as $b$.
For each $b'\in E_b$, 
there exists a cut involution $\phi_{b,b'}$ of $H'$ that fixes all other vertices, but maps $b$ and $b'$ to each other. Moreover, the cut involutions $\phi$ of $H$ extend naturally to cut involutions $\phi'$ of $H'$ where $\phi'(v)=\phi(v)$ for $v\in V(H)=A'$ and $\phi'(u)$ is the $j$-th copy of $\phi(e)$ if $u\in B'$ is the $j$-th copy of $e\in E(H)$.
Together, these cut involutions easily show that the two isomorphic induced subgraphs on $A\cup B'$ and $B\cup B'$ are layers in $H'$.

We claim that any two edges $ab$ and $bc$ with $a \in A$, $b \in B'$ and $c \in B$ 
give an appropriate $J_0$ for a layered percolating sequence. 
Indeed, by using the cut involutions $\phi_{b,b'}$ for every pair $(b,b')$, where $b'$ is a copy of the same $H$-edge as $b$, one obtains a folding sequence $J_0,J_1,\dots,J_{t-1}$, where $J_{t-1}$ contains all edges of the forms $ab'$ and $b'c$ with $b' \in E_b$. If we then use the standard percolating sequence for $H$ given by~\cite[Theorem~4.12]{CL17}, we obtain a layered percolating sequence.

By Theorem~\ref{thm:multi_perc}, it remains to check that the layers can be relocated. Recall that the two layers are the induced subgraphs on $A\cup B'$ and $B\cup B'$, respectively, which are  isomorphic copies of $|A|$ disjoint $td$-stars, where $d$ is the degree of a vertex in the regular graph $H$. To relocate the layer on $A\cup B'$, we replace a vertex $a\in A$ by a vertex $b\in B$. Then the $td$-star centred at $a$ is replaced by another $td$-star that shares $t$ leaves with each of the $d$ different $td$-stars centred at the neighbours of $b$ in $H$.
Let $J$ be this edge-disjoint (but \emph{not} vertex-disjoint) union of $d+1$ different $td$-stars. Then $J$ consists of $d$ copies of $K_{2,t}$ such that the two-vertex side of each $K_{2,t}$ consists of a vertex $v\in A$ and $b$, where $v$ and $b$ are adjacent in $H$, and $t(d-1)$ pendant leaves attached to each $v\in A$. 
The induced subgraph on $A\setminus\{a\}\cup\{b\}\cup B'$ is the disjoint union of $J$ and $|A|-d-1$ further $td$-stars.  Theorem~2.7 in \cite{L19} now implies that 
\begin{align*}
    t_J(W) \geq t_{S}(W)^{d+1},
\end{align*}
where $S$ denotes the $td$-star. Thus, $J$ dominates a $td$-star and, moreover, it contains two seeds. It is then simple to conclude that the induced subgraph on $A\setminus\{a\}\cup\{b\}\cup B'$ is a relocation of the layer induced on~$A\cup B'$.
\end{proof}

In particular, this result clearly implies our Theorem~\ref{thm:examp}, since the complete bipartite graphs $K_{t,t}$ are themselves regular reflection graphs. 
For more examples, suppose $r < t$ and consider the bipartite graph 
between the set of $r$-subsets and the set of $(t-r)$-subsets of $[t]$ where two sets are connected if and only if the smaller of the two sets is contained in the larger one. This family of bipartite Kneser graphs, as they are known, includes $K_{t,t}$ with a perfect matching removed and the middle layer graph.  They are all regular reflection graphs and so, by Theorem~\ref{thm:K2t-replace}, their $1$-subdivisions, and all their $K_{2,t}$-replacements, are  dominating.

We have already seen that some trees are dominating, including even-length paths and the example shown in Figure~\ref{fig:H+}, but there are more examples.
A \emph{perfect $d$-regular tree} is a tree $T$ rooted at a vertex $r$ such that $r$ has $d$ children, every vertex other than $r$ or the leaves has $d-1$ children and every leaf is at the same distance from~$r$. In particular, every path of length $2k$ is a $2$-regular tree of depth $k$.
Thus, the following theorem generalises Godsil's result on even-length paths.

\begin{theorem}\label{thm:tree}
Every perfect $d$-regular tree $T$ is dominating.
\end{theorem}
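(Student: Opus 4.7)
My plan is to apply Theorem~\ref{thm:perc} with the natural layering of $T$ by depth: for $i \in [k]$, let $H_i$ consist of the $d(d-1)^{i-1}$ edges between the vertices at depth $i-1$ and those at depth $i$, so that $E(T) = H_1 \cup \cdots \cup H_k$.

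First I would exhibit the layer-preserving cut involutions. For every non-leaf vertex $v$ and every pair of children $c_1, c_2$ of $v$, the map that exchanges the subtrees rooted at $c_1$ and $c_2$ via their canonical isomorphism and fixes everything else is an involutive automorphism of $T$. Its fixed-point set contains $v$ together with the complement of the two swapped subtrees, which separates them, so this is a cut involution. Since depth is preserved, these subtree-swap involutions generate a layer-preserving subgroup $\Phi$ of the cut involution group.

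Next I would build a layered percolating sequence starting from the seed set $J_0 = \{v_0 v_1, v_1 v_2, \ldots, v_{k-1} v_k\}$, the edge set of a fixed root-to-leaf path, which places exactly one seed in each layer. Applying the half-folding maps of subtree swaps at $v_0$ first copies the path onto the root-to-leaf paths through the remaining children of $v_0$; recursing down at $v_1, v_2, \dots$ then fills the entire edge set of $T$, so an appropriate signature completes the percolation.

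The third and hardest step is to verify that the layers can be relocated. For a contiguous initial interval $I = \{1, \ldots, j\}$ with $j < k$, the subgraph $H_I$ is itself a perfect $d$-regular tree $T_j$ of depth $j$, and I would relocate it to the isomorphic copy of $T_j$ re-rooted at $v_1$, using the edge $v_0 v_1$ as one of the $d$ children-edges of the new root; this copy is isomorphic to $H_I$ and contains the $j+1 > |I|$ seeds $v_0 v_1, \ldots, v_j v_{j+1}$. For a general $I$, $H_I$ decomposes into disjoint components, each of which is either a full perfect subtree $T_b$ (if $1 \in I$) or a truncated subtree whose root has only $d-1$ children (one per maximal block of $I$); I would shift a well-chosen component one depth closer to the seed path so that it absorbs an additional seed, verifying domination via Sidorenko-type bounds for trees and the power-mean inequality for stars, in the spirit of the relocation step in the proof of Theorem~\ref{thm:K2t-replace}.

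The main obstacle will be making this shift for non-contiguous $I$ while keeping $H'$ edge-disjoint from the remaining components of $H_I$: when a component is shifted one layer upward, its new edges naturally overlap the edges of sibling components rooted at the same parent, so one must either allow the relocation $H'$ to be a proper subgraph of $T$ rather than an isomorphic copy of $H_I$ and verify the inequality $t_{H'}(W)^{1/e(H')} \ge t_{H_I}(W)^{1/e(H_I)}$ directly, or cleverly regroup the components before shifting to avoid the collision. Navigating this case analysis, while keeping the domination inequalities sharp, is the delicate technical core of the proof.
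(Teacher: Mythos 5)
Your setup (layering by depth, seeds given by a root-to-leaf path, subtree-swap involutions) matches the paper's, but the heart of the matter---showing that the layers can be relocated for an \emph{arbitrary} nonempty $I\subsetneq[k]$---is exactly where your proposal stops. You handle only the contiguous initial intervals, and for general $I$ you describe a ``shift one component upward'' scheme, correctly observe that the shifted edges collide with sibling components, and then leave both suggested repairs unexecuted. As written, this is a genuine gap: Theorem~\ref{thm:perc} requires a relocation for \emph{every} such $I$, and the non-contiguous case is not reduced to anything you have proved.

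The way to close it---and this is what the paper does---is to stop trying to produce an isomorphic, edge-disjoint re-embedding of $H_I$ at all. A relocation only needs to be a subgraph of $T$ that \emph{dominates} $H_I$ and contains more than $|I|$ seeds, so one can use a single target for every $I$: argue by induction on the depth $k$. If $I\subsetneq[k]$, then $H_I$ is a union of at most $k-1$ layers, and each of its connected components (a tree of depth at most $k-1$ with root degree at most $d$ and all other internal degrees at most $d-1$, as in your block decomposition) is a subgraph of the perfect $d$-regular tree of depth $k-1$. By the induction hypothesis that tree is dominating, so it dominates each component, and since homomorphism densities multiply over components, it dominates all of $H_I$ with the correct normalisation. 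Finally, the depth-$(k-1)$ perfect $d$-regular tree embeds in $T$ so as to contain an entire root-to-leaf path of $T$ (your own re-rooting at $v_1$ is precisely this embedding), hence contains all $k>|I|$ seeds. This single observation makes your ``delicate technical core'' disappear: no shifting of components, no edge-disjointness bookkeeping, and no ad hoc Sidorenko-type or power-mean estimates are needed.
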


\begin{proof}
We proceed by induction on the depth $k$ of the tree. As the statement is simple for $k = 1$, we may assume that $T$ has depth $k$ and any subgraph of the perfect $d$-regular tree of depth $k-1$ is dominated by that tree. 
Since it is clear that $T$ has layered percolating sequences, with the edges of any shortest path from the root $r$ to a leaf serving as the seeds, it will suffice to show that the layers can be relocated. The layers here are just the bipartite graphs between the vertices of depths $i$ and $i+1$ for $0 \leq i < k$, so if we have a union of at most $k-1$ layers, then each of the connected components of the resulting graph is a subgraph of the perfect $d$-regular tree of depth $k-1$, so they and their union are dominated by this tree. 
But this tree is easily seen to be isomorphic to a subgraph of $T$ that includes a shortest path from $r$ to the leaves, so the layers can indeed be relocated.
\end{proof}

To close this section, we note that, unlike weakly norming graphs (see~\cite{H10}), the family of dominating graphs is not closed under taking (bipartite) tensor products. For example, consider the tensor product $C_6^+ \times C_6^+$, where the two copies of $C_6^+$ are oriented the opposite way. Then each side of the bipartition of $C_6^+ \times C_6^+$ has vertices with two distinct degrees and so, by Lemma~\ref{prop:regular}, the graph cannot be dominating. However, as shown by the following lemma, which 
 says that blowups of dominating graphs are again dominating, some tensor products are still allowed. 

\begin{lemma}
If $H$ is a dominating graph, then so is $H \times K_{m,m}$ for any integer $m \geq 1$.
\end{lemma}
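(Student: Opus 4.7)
Since any dominating graph is bipartite, $H \times K_{m,m}$ decomposes as two vertex-disjoint isomorphic copies of the blowup $H[m]$, obtained by replacing each vertex of $H$ by $m$ clones and each edge $uv$ by a copy of $K_{m,m}$ between the clones of $u$ and those of $v$. By the preceding lemma, it suffices to show that $H[m]$ is dominating, which I plan to deduce from Theorem~\ref{thm:perc}.

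I will take the layers of $H[m]$ to be the $e(H)$ copies of $K_{m,m}$, one per edge of $H$. For every $v \in V(H)$ and distinct $i, j \in [m]$, the transposition of the clones $v_i$ and $v_j$ (fixing all other vertices) is a cut involution of $H[m]$ since these clones are twins in $H[m]$, and it preserves every layer. Using these clone-swaps a layered percolating sequence can be built from the seeds $\{(u_1, v_1) : uv \in E(H)\}$: processing each vertex $w \in V(H)$ in turn, apply the half-foldings corresponding to the transpositions $(w_1\, w_i)$ for $i = 2, \dots, m$, each of which lets $w_i$ inherit the current incidences of $w_1$; after all vertices have been processed, every layer's $K_{m,m}$ is filled.

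For relocatability, introduce the symmetric measurable function $T_m : [0,1]^m \times [0,1]^m \to [0,1]$ defined by $T_m(\bx, \by) := \prod_{i,j=1}^m W(x_i, y_j)$. A direct calculation gives $t_{J[m]}(W) = t_J(T_m)$ for every subgraph $J \subseteq H$. Applying the hypothesis that $H$ is dominating to $T_m$ and using $e(J[m]) = m^2 e(J)$ yields $t_{J[m]}(W)^{1/e(J[m])} \leq t_{H[m]}(W)^{1/e(H[m])}$, so that $H[m]$ dominates the blowup $J[m]$ of every subgraph $J \subseteq H$. In particular, for every nonempty $I \subsetneq E(H)$, the ambient graph $H[m]$ itself serves as a valid relocation of $H_I = I[m]$: it dominates $I[m]$ by the inequality just established and contains all $e(H) > |I|$ seeds. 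Theorem~\ref{thm:perc} then yields that $H[m]$, and hence $H \times K_{m,m}$, is dominating.

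The main thing to check is that sub-blowup dominance alone suffices for relocatability — relocating to the ambient graph itself is legitimate here precisely because the tensored-graphon identity $t_{J[m]}(W) = t_J(T_m)$ allows the hypothesis that $H$ is dominating to be invoked without assuming the conclusion that $H[m]$ is dominating; the only other point requiring care is that the clone-swap half-foldings really do propagate the seeds through all incident layers in parallel, which is the standard $K_{m,m}$-percolation run simultaneously across the edges of $H$.
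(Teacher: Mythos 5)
Your argument is correct, and while it rests on the same two ideas as the paper's sketch, it routes them differently. The paper argues directly on $H \times K_{m,m}$ in two bespoke steps: first, for an arbitrary subgraph $J$ it percolates $K_{m,m}$ fibre by fibre over the edges of $H$ (fixing all vertices outside the fibre of a single edge $e$ and using the percolating sequence of $K_{m,m}$) to bound $t_J(G)$ by a product of densities of graphs of the form $J_i \times K_{m,m}$ with $J_i \subseteq H$; second, it dominates each $J_i \times K_{m,m}$ by $H \times K_{m,m}$ via an auxiliary graph $G^*$ on ordered $m$-tuples, which is precisely the graph-language version of your graphon $T_m$. You instead reduce to the blowup $H[m]$ via the component lemma of Section~\ref{sec:nec} and then verify the hypotheses of Theorem~\ref{thm:perc}: the per-edge copies of $K_{m,m}$ as layers, the clone transpositions as layer-preserving cut involutions (they are indeed cut involutions, exactly like the maps $\phi_{b,b'}$ in the proof of Theorem~\ref{thm:K2t-replace}), the first-clone copy of $H$ as seed set, and --- the one genuinely different twist --- the ambient graph $H[m]$ itself as the relocation of every $E(H_I)=I[m]$, justified by $t_{I[m]}(W)=t_I(T_m)$ and the hypothesis that $H$ is dominating, so there is no circularity. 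This meets the letter of the relocation definition (it dominates $H_I$ and contains all $e(H)>|I|$ seeds), so Theorem~\ref{thm:perc} applies as a black box; in fact its iteration terminates after one relocation, so your proof is in substance Lemma~\ref{lem:multiperc} plus Cauchy--Schwarz plus the tensor trick, i.e.\ the paper's two ingredients with the percolation step delegated to the general machinery rather than done by hand. What the paper's version buys is flexibility (it adapts, with a bipartite variant of domination, to $K_{m,n}$ and other reflection graphs, as remarked after the lemma); what yours buys is a clean, self-contained invocation of Theorem~\ref{thm:perc}. Two small points for a careful write-up: $T_m$ is a graphon on $[0,1]^m$, so one should note the standard measure isomorphism with $[0,1]$ before applying the definition of domination; and the case $m=1$ is degenerate for your clone swaps (there $H \times K_{1,1}=2H$ is dominating directly by the component lemma).
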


We will only sketch the proof of this lemma, working throughout in the language of graphs rather than graphons. Suppose then that $J$ is a subgraph of $H \times K_{m,m}$ and we would like to show that $t_J(G) \leq t_{H \times K_{m,m}}(G)$ for all graphs $G$. The first step is to show that there exist subgraphs $J_1, \dots, J_t$ of $H$ and positive real numbers $\alpha_1, \dots, \alpha_t$ with $\sum_{i=1}^t \alpha_i = 1$ such that $t_J(G) \leq \prod_{i=1}^t t_{J_i \times K_{m,m}}(G)^{\alpha_i}$. For this, we consider an edge $e = (u,v)$ of $H$ such that $J$ contains an edge of the form $(e, \cdot)$. Then, if we fix all vertices in $H \times K_{m,m}$ except those of the form $(u, \cdot)$ and $(v, \cdot)$ and use the fact that $K_{m,m}$ has a percolating sequence, we can show that $t_J(G) \leq \prod_{i=1}^{s} t_{B_{i}}(G)^{\beta_{i}}$, where each $B_{i}$ is a subgraph of $H \times K_{m,m}$ which includes either all or none of the edges in $e \times K_{m,m}$ and the $\beta_{i}$ are positive real numbers with $\sum_{i=1}^{s} \beta_{i} = 1$. Repeating this process for all edges $e = (u,v)$ of $H$ such that $J$ contains an edge of the form $(e, \cdot)$ implies that $t_J(G) \leq \prod_{i=1}^t t_{F_i}(G)^{\alpha_i}$, where each $F_i$ is a subgraph of $H \times K_{m,m}$ which includes either all of none of the edges in $e \times K_{m,m}$ for all $e \in E(H)$ and $\alpha_1, \dots, \alpha_t$ are positive real numbers with $\sum_{i=1}^t \alpha_i = 1$. But then each $F_i$ is of the form $J_i \times K_{m,m}$, giving exactly the inequality we require.

The second step is to show that $J_i \times K_{m,m}$ is dominated by $H \times K_{m,m}$ for each $J_i$. For this, given a graph $G$, consider the auxiliary graph $G^*$ whose vertex set $A$ is the set of (not necessarily distinct) ordered $m$-tuples in $G$ with an edge between $a$ and $a'$ if there is a homomorphic copy of $K_{m,m}$ between $a$ and $a'$ in $G$. Then, since $H$ is dominating,
\[t_{J_i \times K_{m,m}}(G) = t_{J_i}(G^*) \leq t_H(G^*)^{e(J_i)/e(H)} = t_{H \times K_{m,m}}(G)^{e(J_i \times K_{m,m})/e(H \times K_{m,m})},\]
as required. Combining the results of the two steps then completes the proof.

We note that the same proof works with $K_{m,m}$ replaced by any other vertex-transitive reflection graph, such as an even cycle or a hypercube. Moreover, a similar result also holds for $H \times K_{m,n}$ with $m \neq n$ or even with $K_{m,n}$ replaced by any other reflection graph, but one needs to assume that $H$ satisfies an appropriate bipartite version of the domination property (which is indeed satisfied by all of our examples) for the proof to go through.

\section{Applications of the domination property} \label{sec:apps}

\subsection{Sidorenko's conjecture}

Building on earlier work applying entropy techniques to Sidorenko's conjecture~\cite{CKLL, KLL16, LiSz, Sz14}, it was shown in~\cite[Section 5]{CL17} that weakly norming graphs can be used as building blocks to produce a family of graphs satisfying the conjecture. The proof of that result only relied on the fact that weakly norming graphs are dominating, so it easily extends to dominating graphs. To say more, we need some definitions. 

A \emph{tree decomposition} of a graph $H$ is a pair $(\mathcal{F}, \TT)$ consisting of a family $\mathcal{F}$  of vertex subsets of $H$ and  a tree $\TT$ on $\mathcal{F}$ such that
\begin{enumerate}
\item $\bigcup_{X\in\mathcal{F}}X=V(H)$, 
\item for each $e \in E(H)$, there is a set $X \in \mathcal{F}$ such that
$X$ contains $e$,
\item for $X,Y,Z\in \mathcal{F}$, $X\cap Y\subseteq Z$ 
whenever $Z$ lies on the path from $X$ to $Y$ in $\TT$.
\end{enumerate}

Given a graph $H$ and an induced subgraph $J$,
a \emph{$J$-decomposition} of a graph $H$ is a tree decomposition $(\mathcal{F},\TT)$ of $H$ 
satisfying the following two extra conditions:
\begin{enumerate}
\item 
each induced subgraph $H[X]$, $X \in \FF$, is isomorphic to $J$,
\item 
for every pair $X,Y\in \FF$ which are adjacent in $\TT$, there is an isomorphism between the two copies $H[X]$ and $H[Y]$
of $J$ that fixes $X \cap Y$.
\end{enumerate}
We say that a graph is \emph{$J$-decomposable} if it admits a $J$-decomposition. Our application of the domination property to Sidorenko's conjecture is now as follows.

\begin{theorem}
If $J$ is a dominating graph, then every $J$-decomposable graph $H$ satisfies Sidorenko's conjecture. That is, for any graphon $W$, $t_H(W)\geq t_{K_2}(W)^{e(H)}$.
\end{theorem}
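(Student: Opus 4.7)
The plan is to establish the stronger inequality $t_H(W)^{1/e(H)} \geq t_J(W)^{1/e(J)}$ for every $J$-decomposable graph $H$ and every graphon $W$. Combined with the fact that $J$ dominates the single edge $K_2 \subseteq J$ (which gives $t_J(W)^{1/e(J)} \geq t_{K_2}(W)$), this immediately yields the desired Sidorenko bound $t_H(W) \geq t_{K_2}(W)^{e(H)}$.

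Fix a $J$-decomposition $(\mathcal{F}, \TT)$ of $H$ and, for each edge $(X,Y) \in E(\TT)$, write $K_{X,Y} = H[X \cap Y]$, which by the $J$-decomposition properties is isomorphic to an induced subgraph of $J$. The heart of the argument is the junction-tree inequality
\begin{align*}
t_H(W) \cdot \prod_{(X,Y) \in E(\TT)} t_{K_{X,Y}}(W) \geq t_J(W)^{|\mathcal{F}|}.
\end{align*}
Granted this, the dominance of $J$ over each induced subgraph $K_{X,Y}$ yields $t_{K_{X,Y}}(W) \leq t_J(W)^{e(K_{X,Y})/e(J)}$; combining with the tree-decomposition edge identity $e(H) = |\mathcal{F}| \cdot e(J) - \sum_{(X,Y)} e(K_{X,Y})$ produces $t_H(W) \geq t_J(W)^{e(H)/e(J)}$, which is the stronger inequality above.

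To prove the junction-tree inequality, I would argue via entropy. Working first with a finite graph $G$, define a probability distribution $p$ on maps $V(H) \to V(G)$ by taking the marginal on each bag $X$ to be the uniform distribution $p_X$ on $\Hom(H[X], G) = \Hom(J, G)$ and gluing these together using the running-intersection property of the tree decomposition. Since the isomorphism $H[X] \to H[Y]$ guaranteed by the $J$-decomposition fixes $X \cap Y$, the marginals of $p_X$ and $p_Y$ on $\Hom(K_{X,Y}, G)$ coincide, so $p$ is consistently defined and supported on $\Hom(H, G)$. The entropy chain rule on the junction tree then gives
\begin{align*}
\log|\Hom(H, G)| \geq h(p) = \sum_{X \in \mathcal{F}} h(p_X) - \sum_{(X,Y) \in E(\TT)} h(p_{X \cap Y}) \geq |\mathcal{F}| \log|\Hom(J, G)| - \sum_{(X,Y)} \log|\Hom(K_{X,Y}, G)|,
\end{align*}
where we use $h(p_X) = \log|\Hom(J,G)|$ and $h(p_{X \cap Y}) \leq \log|\Hom(K_{X,Y}, G)|$. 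Exponentiating and normalising by the $n$-factors (which cancel by the vertex identity $v(H) = |\mathcal{F}| \cdot v(J) - \sum v(K_{X,Y})$), then taking the graphon limit, recovers the junction-tree inequality for arbitrary $W$.

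The chief technical obstacle is executing the junction-tree construction with care, in particular verifying that the uniform bag-marginals really do agree on overlaps (which follows from the isomorphism condition in the $J$-decomposition), and handling the passage to the graphon limit in degenerate cases; for the latter, one notes that if some $t_{K_{X,Y}}(W) = 0$, then $t_J(W) = 0$ as well, since the $K_{X,Y}$-edges appear inside the $J$-integrand, so the inequality becomes trivial.
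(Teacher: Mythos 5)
Your proposal is correct and is essentially the proof the paper relies on: the theorem is justified there by appeal to~\cite[Section 5]{CL17}, whose entropy argument is exactly your junction-tree gluing of uniform bag distributions (consistent on overlaps thanks to the isomorphisms fixing $X\cap Y$), with the domination property of $J$ used precisely where you use it, to bound the intersection terms $t_{H[X\cap Y]}(W)\leq t_J(W)^{e(H[X\cap Y])/e(J)}$. The supporting identities $v(H)=|\mathcal{F}|\,v(J)-\sum_{(X,Y)}v(H[X\cap Y])$ and $e(H)=|\mathcal{F}|\,e(J)-\sum_{(X,Y)}e(H[X\cap Y])$ follow from the running-intersection property, so your chain of inequalities, including the intermediate bound $t_H(W)\geq t_J(W)^{e(H)/e(J)}$, closes without a gap.
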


As in~\cite{CL17}, we expect that entropy techniques can be used to broaden this class further, but we have chosen not to pursue this direction here.

\subsection{The Erd\H{o}s--Simonovits conjecture}

The supersaturation conjecture of Erd\H{o}s and Simonovits~\cite{S84} is often cited as an equivalent formulation of Sidorenko's conjecture, but it is in fact stronger 
for sparse graphs. Recall that $\mathrm{ex}(n,H)$ is the largest number of edges in an $H$-free graph with $n$ vertices. Their conjecture is then as follows.

\begin{conjecture}[Erd\H{o}s--Simonovits]
Let $H$ be a bipartite graph. 
Then there exist positive constants $c$ and $C$ such that every $n$-vertex graph $G$ with at least $C\cdot \mathrm{ex}(n,H)$ edges contains at least $c\cdot n^{v(H)}p^{e(H)} $ copies of $H$, where $p=t_{K_2}(G)$.
\end{conjecture}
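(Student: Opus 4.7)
The plan is to attack the full Erd\H{o}s--Simonovits conjecture as stated, not merely the $J$-decomposable subclass handled in the previous subsection. The overall strategy has two layers: first use the hypothesis $e(G)\geq C\cdot\mathrm{ex}(n,H)$ to extract a quasirandom subconfiguration of $G$ at density scale $p$, and then lower-bound the number of copies of $H$ inside this subconfiguration by a Sidorenko-type inequality, amplified where possible by the domination machinery of Section~\ref{sec:perc}.

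Concretely, I would apply a sparse variant of Szemer\'edi's regularity lemma at density scale $p = t_{K_2}(G)$ to partition $V(G)$ into parts such that most pairs are $\varepsilon$-regular of density $\Theta(p)$. The Tur\'an-number hypothesis enters in the usual way: edges lying in sparse or irregular pairs account for only a tiny fraction of $e(G)$, so the bulk of the edges survives the reduction and the reduced graph still has density $\Omega(p)$. On the reduced graph a standard counting lemma then delivers $\Omega(n^{v(H)}p^{e(H)})$ homomorphisms provided a Sidorenko-type inequality is available for $H$, and degenerate homomorphisms are discarded as a lower-order term, which is valid as long as $p$ exceeds the inverse of the maximum $1$-density over subgraphs of $H$; this is precisely what the Tur\'an threshold ensures by the standard lower bound $\mathrm{ex}(n,H)\gtrsim n^{2-1/m(H)}$.

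The main obstacle is the Sidorenko step, which is open for an arbitrary bipartite $H$ and is in fact subsumed by the conjecture in question (the case $p$ bounded below by a constant). The tools of this paper supply the substitute bound $t_H(W)\geq t_J(W)^{e(H)/e(J)}$ for any dominating subgraph $J\subseteq H$, but unless $J$ itself carries enough of the density this falls strictly short of $p^{e(H)}$, since one only recovers $t_H(W)\geq p^{e(J)\cdot e(H)/e(J)}$ when $J$ happens to satisfy Sidorenko with equality, and otherwise the domination bound is weaker than what the conjecture asserts. Consequently, pushing the scheme above from the $J$-decomposable class to an arbitrary bipartite $H$ appears to require either a full resolution of Sidorenko's conjecture or a genuinely new ingredient outside the domination and reflection-group framework developed here; it is there that the essential difficulty lies, and the realistic expectation is that the approach above yields only a proof under the additional hypothesis that $H$ admits a suitable dominating cover, recovering the Sidorenko application of the preceding subsection rather than the full conjecture.
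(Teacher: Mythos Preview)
The statement you are attempting is presented in the paper as an \emph{open conjecture}, not a theorem; the paper offers no proof of it and none is expected. What the paper does prove is the partial result Theorem~\ref{thm:supersat}, which obtains the supersaturation conclusion under the extra hypotheses that $H$ dominates $H\setminus v$ for every $v\in V(H)$ and that each $H\setminus v$ satisfies Sidorenko's conjecture, and with the threshold $C\cdot\mathrm{ex}(n,H)$ replaced by $Cn^{2-1/\Delta}$. There is therefore no ``paper's own proof'' to compare against; your final paragraph already correctly diagnoses that the full conjecture lies beyond the methods available here.

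That said, the sketch contains a genuine misstep worth flagging. The sentence ``the tools of this paper supply the substitute bound $t_H(W)\geq t_J(W)^{e(H)/e(J)}$ for any dominating subgraph $J\subseteq H$'' has the direction of domination backwards. A graph $J$ being dominating means $t_J(W)^{1/e(J)}\geq t_{J'}(W)^{1/e(J')}$ for subgraphs $J'$ of $J$; it says nothing about $t_H$ for a supergraph $H$ of $J$. The inequality you wrote would require $H$ to dominate $J$, which is exactly the hypothesis one does not have for a general bipartite $H$. Separately, the sparse-regularity route you outline is circular in a well-known way: the counting lemma for a general $H$ at density $p$ below the K\H{o}v\'ari--S\'os--Tur\'an scale is itself essentially the supersaturation statement one is trying to prove, so invoking it as a black box assumes the conclusion.
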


In full generality, this conjecture is known for only a handful of special cases. However, here we show that dominating graphs do satisfy the conjecture to some nontrivial extent. In the proof below, we will use the notation $\Hom(H,G)$ to denote the set of homomorphisms from a graph $H$ to another graph $G$. In particular, if $G$ has $n$ vertices, $t_H(G) = |\Hom(H,G)|/n^{v(H)}$. 

\begin{theorem}\label{thm:supersat}
Let $H$ be a bipartite graph with maximum degree $\Delta$. If $H$ dominates $H\setminus v$ for each $v\in V(H)$ and each $H\setminus v$ satisfies Sidorenko's conjecture,
then there exist positive constants $c$ and $C$ such that every $n$-vertex graph with at least $Cn^{2-1/\Delta}$ edges contains at least $cn^{v(H)}p^{e(H)}$ copies of $H$, where $p=t_{K_2}(G)$.
\end{theorem}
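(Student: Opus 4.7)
The plan is to combine a Sidorenko-type lower bound on $|\Hom(H, G)|$, which the hypotheses immediately supply for $H$ itself, with a Cauchy--Schwarz argument that controls the fraction of non-injective homomorphisms. The max-degree assumption, encoded by $e(G) \geq Cn^{2-1/\Delta}$, enters precisely at the second step, where we need $p$ large enough that the non-injective homomorphisms are negligible.

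First I would show that $H$ itself satisfies Sidorenko's conjecture. For any $v \in V(H)$, the domination hypothesis gives $t_H(G)^{1/e(H)} \geq t_{H\setminus v}(G)^{1/e(H\setminus v)}$, while Sidorenko for $H \setminus v$ gives $t_{H\setminus v}(G) \geq p^{e(H\setminus v)}$. Combining these, $t_H(G) \geq p^{e(H)}$, and hence $|\Hom(H, G)| \geq n^{v(H)} p^{e(H)}$.

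The heart of the argument is to bound the non-injective homomorphisms. For a non-adjacent pair $\{u, v\} \subseteq V(H)$ (pairs adjacent in $H$ contribute nothing, since $G$ has no loops), the homomorphisms $\phi \colon H \to G$ with $\phi(u) = \phi(v)$ are in bijection with $\Hom(H_{uv}, G)$, where $H_{uv}$ is $H$ with $u, v$ identified. Decomposing such a $\phi$ into its restriction $\phi' \colon H - u - v \to G$ and the common image $\phi(u) = \phi(v)$,
\[
|\Hom(H_{uv}, G)| = \sum_{\phi'} d_G\bigl(\phi'(N_H(u) \cup N_H(v))\bigr),
\]
where $d_G(S)$ denotes the number of common neighbors of $S$ in $G$. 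Two applications of Cauchy--Schwarz -- first the pointwise estimate $d_G(A \cup B) \leq \sqrt{d_G(A)\,d_G(B)}$ obtained by pairing the indicators of $N_G(A)$ and $N_G(B)$, and then on the outer sum over $\phi'$ -- give
\[
|\Hom(H_{uv}, G)|^2 \leq |\Hom(H - u, G)| \cdot |\Hom(H - v, G)|.
\]
Applying the domination hypothesis in its upper-bound form $t_{H - u}(G) \leq t_H(G)^{e(H - u)/e(H)}$, likewise for $H - v$, substituting $|\Hom(H, G)| \geq n^{v(H)} p^{e(H)}$ and using $e(H - u) = e(H) - d_H(u)$, a short exponent rearrangement yields
\[
\frac{|\Hom(H_{uv}, G)|}{|\Hom(H, G)|} \leq \frac{1}{n\,p^{(d_H(u) + d_H(v))/2}} \leq \frac{1}{n\,p^{\Delta}} \leq \frac{1}{(2C)^{\Delta}},
\]
where the final step uses $p = 2e(G)/n^2 \geq 2Cn^{-1/\Delta}$.

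Summing over the at most $\binom{v(H)}{2}$ non-adjacent pairs and choosing $C$ large enough that $(2C)^{\Delta} \geq v(H)^{2}$, the non-injective homomorphisms account for at most half of $|\Hom(H, G)|$. Consequently, the number of embeddings is at least $\tfrac{1}{2} n^{v(H)} p^{e(H)}$, and the number of copies of $H$, after dividing by $|\op{Aut}(H)|$, is at least $c\,n^{v(H)} p^{e(H)}$ with $c = (2|\op{Aut}(H)|)^{-1}$. The main technical obstacle is the exponent bookkeeping in the Cauchy--Schwarz/domination step: the inequality $(d_H(u) + d_H(v))/2 \leq \Delta$ is exactly what lets $p^{-(d_H(u) + d_H(v))/2}$ be absorbed into $p^{-\Delta}$ and then cancelled against $n p^{\Delta} \geq (2C)^{\Delta}$, so this is where the max-degree hypothesis is essential.
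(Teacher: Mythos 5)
Your proof is correct, and at its core it runs on the same three ingredients as the paper's: the domination hypothesis for the vertex-deleted subgraphs, Sidorenko for those subgraphs, and the density hypothesis entering only through $np^{\Delta}\geq (2C)^{\Delta}$ to make the degenerate homomorphisms a minority. The difference is in how the degenerate count is organised. The paper argues by contradiction: if most homomorphic copies are degenerate, then $\sum_{v}|\Hom(H\setminus v,G)|\geq (1-c)|\Hom(H,G)|$, and feeding the maximising $H'=H\setminus v$ into the chain $t_H(G)\geq t_{H'}(G)^{e(H)/e(H')}$, $t_{H'}(G)\geq p^{e(H')}$ forces $p\lesssim n^{-1/\Delta}$, contradicting $e(G)\geq Cn^{2-1/\Delta}$. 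You instead argue directly, first deducing Sidorenko for $H$ itself, and then bounding each colliding pair $\{u,v\}$ via the quotient graph $H_{uv}$ and the Cauchy--Schwarz inequality $|\Hom(H_{uv},G)|^2\leq |\Hom(H\setminus u,G)|\,|\Hom(H\setminus v,G)|$ before applying domination in its upper-bound form. This per-pair bookkeeping is fine (and even gives the slightly sharper exponent $(d_H(u)+d_H(v))/2$ in place of $\Delta$), but the Cauchy--Schwarz step is more than you need: since $H\setminus v$ is a subgraph of $H_{uv}$ on the same vertex set, restriction already gives $|\Hom(H_{uv},G)|\leq |\Hom(H\setminus v,G)|$, which is essentially the containment the paper uses. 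So what your route buys is a cleaner, contradiction-free statement with explicit constants $C$ with $(2C)^{\Delta}\geq v(H)^2$ and $c=(2|\mathrm{Aut}(H)|)^{-1}$, at the cost of a little extra machinery; the paper's route is cruder in the counting (it is even slightly loose there, costing only a factor $v(H)$ absorbed into $C$) but shorter.
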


\begin{proof}
Suppose, for the sake of contradiction, that there are no such constants $c,C>0$. That is, given any $c,C>0$, there exists an $n$-vertex graph $G$ with $e(G)\geq Cn^{2-1/\Delta}$ such that at least a $(1-c)$-proportion of the homomorphic copies of $H$ in $G$ are degenerate.
As each degenerate copy of $H$ is contained in a homomorphic copy of $H\setminus v$ for some $v\in V(H)$, it follows that
\begin{align*}
    \sum_{v\in V(H)}|\Hom(H\setminus v,G)|\geq (1-c)|\Hom(H,G)|.
\end{align*}
Let $H'$ be the subgraph of $H$ that attains the maximum value of $|\Hom(H\setminus v,G)|$ amongst all $H\setminus v$ for $v\in V(H)$. In particular, $v(H)|\Hom(H',G)|\geq (1-c)|\Hom(H,G)|$.
Dividing both sides by $n^{v(H)}$ then gives $\frac{v(H)}{n}t_{H'}(G)\geq (1-c)t_{H}(G)$. Combining this with the domination inequality $t_{H}(G)\geq t_{H'}(G)^{e(H)/e(H')}$ and the Sidorenko inequality $t_{H'}(G)\geq t_{K_2}(G)^{e(H')}$ gives 
\begin{align*}
    \frac{v(H)}{(1-c)n} \geq t_{H'}(G)^{e(H)/e(H')-1} \geq t_{K_2}(G)^{e(H)-e(H')}\geq p^{\Delta},
\end{align*}
where the final inequality used $e(H)-e(H')\leq \Delta$.
Therefore, $p\leq \left(\frac{v(H)}{1-c}\right)^{1/\Delta} n^{-1/\Delta}$. Now choosing $c=1/2$ and $C=3v(H)$ yields the desired contradiction.
\end{proof}

\section{Concluding remarks} \label{sec:conc}

{\bf Which graphs are dominating?} Following~\cite{CL17}, it is reasonable to conjecture that a graph is weakly norming if and only if it has a percolating sequence, but we suspect that there is no such clean characterisation for dominating graphs. However, there are several interesting questions whose answers would help to clarify the nature of this family.

For instance, is it the case that every dominating graph arises through our reflection/relocation mechanism? And, if so, is it the case that they can all be associated in some nontrivial way to a reflection group? It is a little hard to make these questions more precise, but perhaps there is some example we missed that answers both in the negative.

We have already seen that there are dominating graphs, such as our running example $C_6^+$, which are not edge transitive. This again sets them apart from weakly norming graphs, since Sidorenko~\cite{S20} has shown that any weakly norming graph must be edge transitive. However, all of our bi-regular examples are edge transitive. 
It would be interesting to decide if this is a more general phenomenon.

It would also be interesting to decide whether there are examples other than trees 
where we need to look at three or more distinct layers (or, alternatively, 
where we need to relocate two or more times) to confirm the domination property. For both Theorem~\ref{thm:star} and~\ref{thm:K2t-replace}, we use only two layers and, despite searching extensively, we did not find any interesting examples with more layers. A related problem is to find an example of a dominating graph which has more than two distinct degrees on the larger side or, which would be more interesting, to show that no such graph exists.

Finally, as with our work on weakly norming graphs, most of the concepts and results in this paper extend to hypergraphs. We will not go into this in any depth here, but many of the same questions can also be asked in this more general context.

\vspace{3mm}
\noindent
{\bf Strong domination.} We say that a graph $H$ {\it strongly dominates} another graph $H'$ if
\[|t_H(W)|^{1/e(H)} \geq |t_{H'}(W)|^{1/e(H')}\]
for any bounded symmetric measurable function $W$ from $[0,1]^2$ to $\mathbb{R}$. That is, $W$ is no longer required to take positive values. 
This is in line with the distinction between norming and weakly norming graphs in~\cite{H10}. We then say that a graph $H$ is {\it strongly dominating} if it strongly dominates all of its subgraphs. As in~\cite{CL17}, our results here have analogues in the strongly dominating case. For instance, the analogue of Theorem~\ref{thm:perc} holds provided all of our reflections and relocations correspond to strong domination inequalities. In practice, this means that all of our cut involutions must be {\it stable}, meaning that the fixed set of the involution is an independent (or stable) set. However, besides the family of norming graphs themselves (see~\cite{LS22} for an in-depth study of which reflection graphs are norming), the only strongly dominating graphs we were able to identify are even-length paths. It would be interesting to find more.

These notions also suggest the study of which graphs have the {\it strong Sidorenko property}, that $H$ strongly dominates $K_2$, i.e., \[|t_H(W)| \geq |t_{K_2}(W)|^{e(H)}\]
for all bounded symmetric measurable functions $W:[0, 1]^2 \to \mathbb{R}$. Clearly, any such graph must be bipartite. If $H$ has the stronger property that it strongly dominates $K_{1,2}$, then $H$ must also be positive, in the sense that $t_H(W) \geq 0$ for all $W$ (see~\cite{ACetal16} for more on positive graphs, including a tantalising conjecture about their structure). To show this, suppose that $t_H(W) < 0$ and choose some $W'$ such that $t_H(W') > 0$. Then there exists $p > 0$ such that $t_H(W + pW') = 0$. However, unless $W + p W'$ averages to zero along almost every fibre, a situation which an appropriate choice of $W'$ avoids, we must have $t_{K_{1,2}}(W+pW') > 0$, contradicting the assumption that $H$ strongly dominates $K_{1,2}$. We believe that a similar conclusion should hold if we only know that $H$ strongly dominates $K_2$, namely, that once we delete any isolated edges from $H$, the remaining graph $H'$ should be positive. 

It is tempting also to conjecture a converse, saying that any positive bipartite graph has the strong Sidorenko property. Unfortunately, this is false. Indeed, the disjoint union $2H$ of two copies of $H$ is always positive, but it does not have the strong Sidorenko property unless $H$ itself does. It remains a tantalising problem to classify, or at least formulate a reasonable conjecture about, those graphs which do have the strong Sidorenko property.

\vspace{3mm}
\noindent
{\bf Other domination inequalities.} Given a graph $H$ and a vertex $x$, the graph $H_x(k)$ is the subgraph induced by all vertices of distance at most $k$ from $x$. If $H$ is vertex transitive, then we may omit the subscript $x$. We make the following conjecture.

\begin{conjecture} \label{conj:levels}
Suppose that $H$ is a vertex-transitive reflection graph with diameter $d$. Then $H(\ell)$ dominates $H(k)$ for all $1 \leq k \leq \ell \leq d$.
\end{conjecture}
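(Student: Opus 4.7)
The plan is to apply Theorem~\ref{thm:perc} to $H(\ell)$ with the natural layered decomposition coming from the distance function from $x$. For $i \in [\ell]$, let $V_i$ denote the set of vertices of $H$ at distance exactly $i$ from $x$ and let $L_i$ be the set of edges of $H$ between $V_{i-1}$ and $V_i$; since $H$ is bipartite (being a reflection graph) and $H(\ell)$ is the induced subgraph on $V_0 \cup V_1 \cup \cdots \cup V_\ell$, the sets $L_1, \ldots, L_\ell$ partition $E(H(\ell))$. Let $\Phi$ be the collection of cut involutions of $H$ lying in the stabilizer of $x$ inside the cut involution group $W_H$. Each such involution preserves the distance to $x$, hence preserves each $L_i$ setwise, so $L_1, \ldots, L_\ell$ constitute layers of $H(\ell)$ in the sense of Section~\ref{sec:perc}.

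The first step is to exhibit a layered percolating sequence. Take $J_0 = \{e_1, \ldots, e_\ell\}$ to be the edge set of a geodesic from $x$ to a vertex $y \in V_\ell$, indexed so that $e_i \in L_i$. Mimicking the proof of Theorem~\ref{thm:multi_perc} inside the subgroup $\langle \Phi \rangle$, which fixes $x$ and acts transitively on the set of geodesics from $x$, we construct a signature whose iterated application starting from $J_0$ fills out each layer $L_i$, producing the required sequence $J_0, J_1, \ldots, J_N = E(H(\ell))$.

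The main step, and likely the main obstacle, is verifying that the layers can be relocated. For a prefix $I = \{1, \ldots, k\}$ the relocation is direct: let $z$ be the $(\ell-k)$-th vertex on the chosen geodesic, so that $H_z(k)$ is a subgraph of $H(\ell)$ isomorphic to $H(k) = H_I$; a short count of geodesic edges shows that $H_z(k)$ contains $\min(2k, \ell) > k = |I|$ of the seeds. For a general nonempty $I \subsetneq [\ell]$, the idea is to translate $H_I$ by an automorphism of $H$ that sends $x$ to another vertex on the geodesic, chosen so that the translate lies in $H(\ell)$ and captures additional seeds. When the translate is not entirely contained in $H(\ell)$, one can supplement with disjoint-union type domination inequalities such as~\cite[Theorem~2.7]{L19} (as in the proof of Theorem~\ref{thm:K2t-replace}) to produce a substitute that still dominates $H_I$.

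Once both steps are established, Theorem~\ref{thm:perc} yields that $H(\ell)$ is a dominating graph, which in particular gives the required domination of $H(k)$ by $H(\ell)$ for every $1 \le k \le \ell$. As a fallback, if relocation for noncontiguous $I$ turns out to be genuinely difficult, one may instead aim only for the special case ``$H(\ell)$ dominates $H(\ell-1)$'' via the two-layer decomposition $\{H(\ell-1), L_\ell\}$, which requires only two relocations, and then obtain the general conjecture by induction on $\ell$ together with transitivity of the domination relation.
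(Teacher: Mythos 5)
The statement you are proving is Conjecture~\ref{conj:levels}, which the paper leaves open (it is verified only for even cycles and for hypercubes), so there is no proof in the paper to compare against; the question is whether your proposal closes the gap, and it does not. The central problem is that your main route attempts to prove something strictly stronger than the conjecture, namely that $H(\ell)$ is a \emph{dominating graph}, and the paper explicitly notes that this is false in general: $Q_n(\ell)$ is generally not dominating, even though $Q_n(\ell)$ does dominate $Q_n(k)$. Since $Q_n$ is a vertex-transitive reflection graph and (as you observe) a layered percolating sequence with the edges of a geodesic as seeds does exist for $Q_n(\ell)$, the hypothesis of Theorem~\ref{thm:perc} that must fail there is precisely the relocatability of the layer-unions $H_I$ for general $I$ --- exactly the step you leave vague (``translate by an automorphism and supplement with~\cite[Theorem~2.7]{L19}''). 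Your prefix-set relocation and the seed count $\min(2k,\ell)>k$ are fine, but Theorem~\ref{thm:perc} genuinely needs a relocation for \emph{every} nonempty $I\subsetneq[\ell]$: in its proof both children $J(\psi_d)$ and the conjugate child are taken at every depth, so by Lemma~\ref{lem:multiperc} the leaves realise arbitrary subsets $I$, even if one only starts from $F=E(H(k))$. So restricting attention to contiguous $I$ cannot be justified within this framework, and for noncontiguous $I$ no argument is given --- nor can one exist in full generality, or $Q_n(\ell)$ would be dominating.

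The fallback is also broken. A layered percolating sequence for the coarse partition $\{E(H(\ell-1)),L_\ell\}$ would force the layer $E(H(\ell-1))$ to be edge-transitive under the group generated by the layer-preserving cut involutions; but the involutions you use preserve distance from $x$ (and in $Q_n(\ell)$ with $2\le\ell\le n-1$ every automorphism fixes $\emptyset$ and preserves the distance levels), so no automorphism in play maps an edge of $L_1$ to an edge of $L_2$, and for $\ell\ge 3$ no such sequence exists. Hence induction plus transitivity only reaches $\ell=2$. Note also that you assume without proof that distance-preserving cut involutions of $H$ restrict to cut involutions of $H(\ell)$ and act transitively on geodesics from $x$ (equivalently, edge-transitively on each $L_i$); this holds in the known examples but is not established for a general vertex-transitive reflection graph. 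For comparison, the evidence the paper gives for the hypercube case proceeds differently: it never claims $Q_n(\ell)$ is dominating, but instead relocates the ball $H(k)$ \emph{inside the ambient graph} using reflections of $H$ itself (the maps $S\mapsto S\triangle\{i\}$, which are not automorphisms of $H(\ell)$), moving the ball to one centred at a neighbouring vertex so that percolation gains one new layer at a time. Any successful attack along your lines would similarly have to compare $H(k)$ with $H(\ell)$ directly, rather than deduce the conjecture from the full domination property of $H(\ell)$.
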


Why believe this conjecture? For one thing, it holds when $H$ is an even cycle, as follows easily from the fact that even-length paths are dominating. But, less obviously, it also holds when $H$ is the $n$-dimensional cube $Q_n$. This again follows from our reflection/relocation mechanism. Indeed, if we label the vertices of the cube with the subsets of $[n]$, then, for $x$ the empty set, $H(k)$ consists of all subsets of $[n]$ with at most $k$ elements. If we relocate $H(k)$ by moving each vertex $S$ to $S \triangle \{i\}$ for some $i$ and then percolate, this increases the number of layers by one. Iterating then gives the result. 

Observe that $Q_3(2) = C_6^+$ is dominating, as is $Q_n(2)$ for any $n$, but $Q_n(\ell)$ is generally not. 
However, the fact that $Q_n(\ell)$ dominates $Q_n(k)$ for all $1 \leq k \leq \ell$ may be seen as a near miss. Conjecture~\ref{conj:levels} is then just asking to what extent this phenomenon generalises. Note that the fact that $H$ is vertex transitive allows for easy relocations, but it also seems to be a necessary condition, as such relocations are not always possible in non-vertex-transitive reflection graphs such as the $1$-subdivision of an octahedron.

\appendix

\end{document}